  \newcommand{\supp}{\ensuremath{\mathrm{supp}}}
  \newcommand{\N}{\ensuremath{\mathbb{N}}}
  \newcommand{\C}{\ensuremath{\mathbb{C}}}
  \newcommand{\udots}{\mathinner{\mskip1mu\raise1pt\vbox{\kern7pt\hbox{.}}
      \mskip2mu\raise4pt\hbox{.}\mskip2mu\raise7pt\hbox{.}\mskip1mu}}
  \renewcommand{\epsilon}{\varepsilon}
  \renewcommand{\phi}{\varphi}
  \newtheorem{conjecture}{Conjecture}[section]
  \newtheorem{example}[conjecture]{Example}
  \newtheorem{lemma}[conjecture]{Lemma}
  \newtheorem{observation}[conjecture]{Observation}
  \newtheorem{proposition}[conjecture]{Proposition}
  \newtheorem{corollary}[conjecture]{Corollary}
  \newtheorem{remark}[conjecture]{Remark}
  \newtheorem{definition}[conjecture]{Definition}
   \newtheorem{theorem}[conjecture]{Theorem}
\begin{document}
  \newcommand{\OptSubscript}[1][]{_{#1}}
  \newcommand{\OptExponent}[1][]{^{#1}}
  \newcommand{\AddParentheses}[1]{(#1)}
  \newcommand{\AddBrakets}[1]{[#1]}
  \newcommand{\ExpPar}[1][]{\OptExponent[#1]\AddParentheses}
  \newcommand{\ExpBra}[1][]{\OptExponent[#1]\AddBrakets}
  \newcommand{\SubExp}[1][]{\OptSubscript[#1]\OptExponent}
  \newcommand{\SubExpPar}[1][]{\OptSubscript[#1]\ExpPar}
  \newcommand{\SubExpBra}[1][]{\OptSubscript[#1]\ExpBra}
  \makeatletter
  \newcommand{\newvariable}[2]{%
    \newcommand{#1}{#2\SubExp}
    \expandafter\newcommand\csname\expandafter\@gobble\string#1Of\endcsname{#2\SubExpPar}
  }
  \makeatother
  \newvariable{\AffBuild}{X}
  \newvariable{\Func}{\mathcal{F}}
  \newvariable{\TheOperator}{\operatorname{D}}
  \newvariable{\TheWidth}{w}
  \newcommand{\notion}[1]{\emph{#1}}
  \newvariable{\mapcolon}{:}
  \newvariable{\TheField}{\mathbb{K}}
  \newvariable{\TheVertex}{v}
  \newvariable{\TheFunction}{u}
  \newvariable{\Ball}{\mathbb{B}}
  \newvariable{\FinBW}{\mathcal{B}}
  \newvariable{\TheGroup}{G}
  \newvariable{\TheGroupElement}{g}
  \newvariable{\TheBaseVertex}{{\TheVertex[0]}}
  \newvariable{\TheVertexSet}{V}
  \newvariable{\TheCore}{\Lambda}
  \newvariable{\Dim}{\operatorname{dim}}
  \newvariable{\Card}{\operatorname{card}}
  \newcommand{\TheStabilizer}{\TheGroup[0]}
  \newvariable{\DimFunc}{p}
  \newvariable{\TildeA}{\tilde{\mathsf{A}}}
  \newvariable{\StdApartment}{\Sigma}
  \newvariable{\StdChamber}{c}
  \newvariable{\TheVertexType}{t}
  \newvariable{\StdVertex}{w}
  \newcommand{\IfEmptyThenElse}[1]{%
    \begingroup
    \def\dummy{#1}%
    \def\empty{}%
    \ifx\dummy\empty
      \def\next##1##2{##1}%
    \else
      \def\next##1##2{##2}%
    \fi
    \expandafter\endgroup\next
  }
  \newcommand{\SetOf}[2][]{\IfEmptyThenElse{#1}{
      \left\{\,#2\,\right\}
    }{
      \left\{\,#1\, : \, #2 \,\right\}
    }
  }
  \newcommand{\FamOf}[2][]{\IfEmptyThenElse{#1}{
      \left(\,#2\,\right)
    }{
      \left(\,#2\,\right)_{#1}
    }
  }
  \newvariable{\Sl}{\operatorname{SL}}
  \newvariable{\TheRank}{n}
  \newvariable{\TypeA}{\begin{color}{blue}0\end{color}}
  \newvariable{\TypeB}{\begin{color}{red}1\end{color}}
  \newvariable{\TypeC}{\begin{color}{black}2\end{color}}
  \newvariable{\CoxGroup}{W}
  \newvariable{\CoxGen}{S}
  \newvariable{\CoxElement}{w}
  \newvariable{\AltCoxElement}{v}
  \newvariable{\TheChamber}{c}
  \newvariable{\AltChamber}{d}
  \newvariable{\WDist}{\delta}
  \newvariable{\TheRadius}{n}
  \newvariable{\TheBall}{B}
  \newvariable{\NatNumbers}{\mathbb{N}}
  \newvariable{\FinSizeOps}{\mathcal{B}}
  \newvariable{\CharFct}{\mathbbm{1}}
  \newvariable{\MeanCharFct}{\tilde{\CharFct}}
  \newvariable{\TheIndex}{i}
  \newvariable{\TheIndexSet}{I}
  \newvariable{\TheCoefficient}{a}
  \newvariable{\MeanFunction}{\tilde{\TheFunction}}
  \newvariable{\TheRepresentation}{\rho}
  \newvariable{\SymGroup}{\operatorname{Sym}}
  \newvariable{\ThePerm}{\pi}
  \newvariable{\ImRep}{Q}
  \newvariable{\GAvgFunction}{\overline{\TheFunction}}
  \newvariable{\PreImRep}{\overline{\ImRep}}
  \newvariable{\AvgOperator}{\overline{\TheOperator}}
  \newvariable{\Image}{\operatorname{im}}
  \newcommand{\avg}{\operatorname{\mathbb{E}}\nolimits}
  \newvariable{\TheReflection}{s}
  \newvariable{\Thickness}{q}
  \newcommand{\crossprod}{\times}
  \newcommand{\WordLengthOf}[1]{|#1|}
  \newcommand{\pref}[1]{(\ref{#1})}
  \newvariable{\Identity}{\operatorname{Id}}
  \newvariable{\TheFieldElement}{x}
  \newvariable{\IntNumbers}{\mathbb{Z}}
  \newvariable{\Oka}{\mathcal{O}}
  \newvariable{\MaxIdeal}{\mathfrak{m}}
  \newvariable{\UnitValuation}{\pi}
  \newcommand{\UnitsOf}[1]{{#1}^*}
  \newvariable{\TheValuation}{v}
  \newcommand{\epirightarrow}{\rightarrow
    \negthinspace\negthinspace\negthinspace\negthinspace
    \rightarrow}
  \newvariable{\TheVectorSpace}{V}
  \newvariable{\TheLattice}{L}
  \newvariable{\AltLattice}{{L'}}
  \newvariable{\TheScalar}{\alpha}
  \newcommand{\HomothetyOf}[1]{[#1]}
  \newvariable{\TheClass}{\Lambda}
  \newvariable{\AltClass}{{\Lambda'}}
  \newvariable{\Gl}{\operatorname{GL}}
  \newvariable{\StdVect}{\mathbf{e}}
  \newvariable{\TheTree}{X}
  \newvariable{\Center}{Z}
  \newvariable{\TheGeodesic}{g}
  \newvariable{\TheInteger}{m}
  \newvariable{\TheVector}{\mathbf{e}}
  \newvariable{\AltVector}{\mathbf{f}}
  \newvariable{\TheMatrix}{A}
  \newcommand{\NormOf}[1]{\| #1 \|}
  \newcommand{\ExpOf}[1]{\mathrm{e}^{#1}}
  \newcommand{\SpanOf}[1]{\langle #1 \rangle}
  \newcommand{\ProjLineOf}[1]{\mathbb{P}_1(#1)}
  \newcommand{\BoundaryOf}[1]{\partial #1}
  \newvariable{\ParGeodesic}{\gamma}
  \newvariable{\RealNumbers}{\mathbf{R}}
  \newvariable{\TheTime}{t}
  \newvariable{\TheGraph}{\Gamma}
  \newvariable{\Pg}{\mathbf{PG}}
  \newvariable{\TheShift}{s}

  \title[Poisson Transforms for Trees]{Poisson Transforms for Trees of Bounded Degree}
  \author{K.-U. Bux, J. Hilgert, T. Weich}
  
\address{K.-U. Bux: Fakult\"at f\"ur Mathematik, Universit\"at Bielefeld, Postfach 100131, D-33501 Bielefeld, Germany; \email{}{bux@math.univ-bielefeld.de}}
  
\address{J. Hilgert: Institut f\"ur Mathematik, Universit\"at Paderborn, D-33095 Paderborn, Germany; \email{}{hilgert@math.univ-paderborn.de}}

\address{T. Weich: Institut f\"ur Mathematik, Universit\"at Paderborn, D-33095 Paderborn, Germany; \email{}{weich@math.univ-paderborn.de}
}

\begin{abstract} 
We introduce a parameterized family of Poisson transforms on trees of bounded degree, construct explicit inverses for generic parameters, and characterize moderate growth of Laplace eigenfunctions by H\"older regularity of their  boundary values.
\end{abstract}
  \maketitle
  \thispagestyle{empty}

Key words:  \emph{Trees and their boundaries}, \emph{Poisson transforms}, \emph{boundary values}, \emph{H\"older spaces}.

\tableofcontents

\section{Introduction}
The classical Poisson transform is an integral transform from the circle to the unit disk turning functions on the circle into harmonic functions on the disk. The transform is injective and the function on the circle can be recovered as the boundary value of the harmonic function.

Harmonic functions are defined as functions annihilated by the classical Laplace operator. The  hyperbolic Laplace-Beltrami operator on the disk, viewed as the Poincar\'e disk, agrees with the classical Laplace operator up to multiplication with a nowhere zero scalar function, so harmonicity  is equivalent to harmonicity with respect to the Laplace-Beltrami operator. This observation is the starting point of a rich theory of Poisson transforms for Riemannian symmetric spaces of non-compact type, which yields joint eigenfunctions for the algebra of invariant differential operators on such spaces. 

In this paper we study Poisson transforms for trees, which in the homogenous situation correspond to symmetric spaces of constant negative curvature, and give a simple proof for the well-known fact (see \cite[Thm.~1.2]{FTN91} and \cite[Thm.~A]{MZ83}) that generic Poisson transforms are a bijection between finite additive measures on the boundary and eigenfunctions of the tree Laplacian. The geometric approach we take allows us to completely remove the homogeneity condition on the tree while at the same time still give explicit inverses. But note that in the case of non-homogeneous trees the Poisson transformation is a bijection between finitely additive measures and the kernel of the Laplacian plus some explicitly given potential. In the case of homogeneous trees the potential is simply a constant and one recovers the classical result of \cite{MZ83,FTN91}, which we thus have extended to an analog of spaces of negative but variable curvature. 

A closely related result on Poisson transformations for general trees has recently been obtained by Anantharaman and Sabri \cite{AS19}. Their perspective, however, is complementary to ours: They start with a given graph and a Schr\"odinger operator and construct a suitable Poisson kernel in terms of Green's functions. In our paper we fix the Poisson kernel to have the usual completely explicit form in terms of horocycle brackets.

Another new consequence that we draw from our explicit construction of the boundary value map concerns the regularity of boundary values: 
In the symmetric space context, one knows that the Poisson transformations relate regularity properties of generalized functions on the boundary to growth conditions of eigenfunctions on the space. The most general domain for the Poisson transform are hyperfunctions on the boundary, resulting in eigenfunctions without further restrictions \cite{K+78}. Moderate growth of eigenfunctions  then correponds to distributions on the boundary, i.e. elements of the dual space of the space of smooth functions \cite{OS80, BS87}. In the case of trees the role of hyperfunctions is taken by finitely additive measures. We show that for trees of bounded degree moderate growth of eigenfunctions corresponds to the dual of certain Banach spaces of H\"older continuous functions. Our motivation to work out this result is not just to have a tree analogon of \cite{OS80, BS87}; the H\"older dual spaces on the boundary that correspond to eigenfunctions of weak moderate growth are precisely the functional spaces that appear in the spectral theory of transfer operators for subshifts of finite type (see e.g. \cite{Ba00}). In a subsequent work we want to apply the regularity results of this paper to establish a quantum-classcial correspondence on graphs, analogous to what has been achieved on Riemannian symmetric spaces \cite{DFG15, KW19, GHW21, HHW21}.
 
The paper is structured as follows. In Section~\ref{sec:trees} we introduce trees and their boundaries, which are compact totally disconnected spaces. In Section~\ref{sec:FA-measures} we introduce finitely additive measures on compact totally disconnected spaces and study them in detail for boundaries of trees. In Section~\ref{sec:Laplace} we recall the definition of the Laplace operator on trees and introduce the parameterized family $(P_z)_{z\in\C}$ of Poisson transforms mapping finitely additive measures on the boundary to functions on the vertices of the tree satisfying a generalized eigenvalue equation for the Laplace operator. For $z^2\not\in\{0,1\}$ we construct a ``boundary value map'' $\vec\beta_z$ in the inverse direction which inverts $P_z$ (Thm.~\ref{thm:Poisson}). It turns out that in the case of homogeneous trees for large enough $|z|$ we can recover $\mu$ as a weak limit from the values $P_z(\mu)(x)$ for $x$ tending to infinity (Cor.~\ref{thm:bdy-value}). In Section~\ref{sec:Hoelder} we introduce the relevant H\"older spaces and prove  our characterization of Laplace eigenfunctions of moderate growth (Thm.~\ref{thm:mod-growth}).  

\emph{Acknowledgements} We thank Etienne Le Masson for helpful remarks. TW acknowledges support
from the Deutsche Forschungsgemeinschaft (DFG) (Grant No. WE 6173/1-1  Emmy Noether group ``Microlocal Methods for Hyperbolic Dynamics'').

\section{Trees of bounded degree and their boundaries}\label{sec:trees}

Let $(\mathfrak X,\mathfrak E)$ be a graph. We view $\mathfrak E$ as a set of two-element subsets of $\mathfrak X$. For $e=\{a,b\}$ with $a,b\in\mathfrak X$ precisely $a$ and $b$ are \emph{incident} with $e$. We also consider the directed version $(\vec{\mathfrak X},\vec{\mathfrak E})$ with $\vec{\mathfrak X}=\mathfrak X$ and $\vec{\mathfrak E} =\{(a,b)\in\mathfrak X^2\mid e=\{a,b\}\in \mathfrak E \}$. For $\vec e=(a,b)$ we call $a=:\iota(\vec e)$ the \emph{initial} and $b=:\tau(\vec e)$ the \emph{terminal point} of $\vec e$. Thus we obtain two maps $\iota, \tau:\vec{\mathfrak E}\to \mathfrak X$. Graphs carry a natural metric $d$ given by the minimal \emph{length} (number of edges) of \emph{chains} (i.e. paths that do not backtrack) between two vertices.

We call $\mathfrak T:= (\mathfrak X, \mathfrak E)$ a \emph{tree} if the graph is connected and if there are no circular chains. On trees for any two vertices $x$ and $y$ there is a unique chain $[x,y]$ connecting $x$ and $y$ and we have  $d(x,y)=\ell([x,y])$. From $\mathfrak X$ and the metric $d$ one can reconstruct the entire tree. Given a vertex $x$ we define the degree $\text{deg}(x)$ at $x$ to be the number of vertices $y$ such that $\{x,y\}\in \mathfrak E$. For convenience we write $q_x:=\text{deg}(x)-1$ and we call the tree to be of \emph{bounded degree} if there is $q_x\leq q_\text{max}<\infty$. From now on we will always assume that all our trees are of bounded degree. 

The \emph{boundary at infinity} $\Omega$ of a tree $(\mathfrak X,\mathfrak E)$ is the set of equivalence classes $[(x_j)_{j\in\mathbb N_0}]$ of infinite chains $(x_j)_{j\in\mathbb N_0}$ of vertices, where two such chains are called \emph{equivalent} if they share infinitely many vertices.

\begin{remark}{\rm
Our definition of a tree does not exclude vertices of degree $1$ as they show up for instance in each finite tree. Geometrically, one might want to view such vertices as part of the boundary. The reason we do not do that is that while our setup is meaningful even for finite trees, our main results become void if there are no boundary points at infinity. So from now on we will only call the elements of $\Omega$ boundary points of $\mathfrak T$.
}
\end{remark}

The disjoint union $\mathfrak X\coprod \Omega$ carries a natural compact topology such that $\Omega$ is a compact subset. This topology is characterized by the fact that each point of $\mathfrak X$ is open and for $\omega\in \Omega$ a basis of neighborhoods of $\omega$ in $\mathfrak X\cup \Omega$ is formed by the sets 
$$\mathfrak X(x,y):=\big\{\omega'\in\Omega\ \big|\  [y,\omega'[\cap [x,y] =\{y\}\big)\}\cup\big\{z\in [y,\omega'[{}\ \big| \ [y,\omega'[\cap [x,y] =\{y\}\big\}$$
with $x\in\mathfrak X$ and $y\in [x,\omega[$, the chain starting in $x$ and defining $\omega$. Then the relative topology on $\Omega$ consists of the sets
$$\Omega_x(y):=\{ \omega\in\Omega\mid y\in [x,\omega[\}$$
for $x,y\in\mathfrak X$. Given $x\in \mathfrak X$ and $n\in\mathbb N$,  $\Omega= \bigcup_{d(x,y)=n} \Omega_x(y)$ is a disjoint union of open compact sets (see \cite[p.~5]{FTN91}). This implies that $\Omega$ as a topological space is totally disconnected.

\section{Finitely additive measures}\label{sec:FA-measures}

In this section we introduce finitely additive measures on compact totally disconnected spaces and study them in some detail in the case of the boundary of a tree.

We start with some general observations on locally constant functions. To this end we fix a locally compact Hausdorff space $Z$.

\begin{remark}\label{Omega-remark}
{\rm  We denote the space of locally constant functions $p:Z\to V$ with values in some $\C$-vectorspace $V$ by $C^{\mathrm{lc}}(Z,V)$ and set $C_c^{\mathrm{lc}}(Z,V):= C_c(Z)\cap C^{\mathrm{lc}}(Z,V)$.
For all $p\in C_c^{\mathrm{lc}}(Z,V)$ we have 
$$p=\sum_{v\in V}  \mathbf{1}_{p^{-1}(v)} v,$$
where $\mathbf{1}_U$ is the indicator function of $U$.
}
\end{remark}

If $Z$ is discrete, the condition ``locally constant'' is void.

\begin{definition}\label{def:alg-duals} We let $\mathcal K'(Z)$ be the algebraic dual of $C_c^{\mathrm{lc}}(Z)$ and $\mathcal K_c'(Z)$ the algebraic dual of $C^{\mathrm{lc}}(Z)$. 
\end{definition}

Note that $\mathcal K'(Z)=\mathcal K_c'(Z)$ if $Z$ is compact.

\begin{proposition}\label{prop:finite image}
For a  continuous map $p: Z\to \C$ with compact support the following conditions are equivalent:
\begin{itemize}
\item[(1)] $p$ is locally constant.
\item[(2)] $p$ takes only finitely many values.
\end{itemize}
The implication $(2)\Rightarrow(1)$ holds also without the compact support assumption.
\end{proposition}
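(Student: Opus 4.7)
The plan is to handle the two implications separately; both are essentially point-set topology, with compactness entering only in one direction.

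For $(2)\Rightarrow(1)$ the strategy is to show that the (finitely many) level sets of $p$ are clopen. If $p(Z)=\{v_1,\dots,v_n\}$, then each $F_i:=p^{-1}(v_i)$ is closed in $Z$ by continuity (since $\{v_i\}$ is closed in $\C$). Since the $F_i$ partition $Z$, each $F_i$ is also the complement of the finite union $\bigcup_{j\neq i}F_j$ of closed sets, and hence open. Thus every level set is clopen, so $p$ is constant on a neighborhood of every point. No compactness or support assumption enters in this direction, which is exactly what the last sentence of the statement records.

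For $(1)\Rightarrow(2)$ the strategy is to combine local constancy with compactness of $K:=\supp p$. Local constancy provides, for each $z\in Z$, an open neighborhood $U_z$ on which $p\equiv p(z)$. The family $\{U_z\}_{z\in K}$ is an open cover of the compact set $K$, so we extract a finite subcover $U_{z_1},\dots,U_{z_n}$. Every point of $K$ lies in some $U_{z_i}$, so on $K$ the function $p$ takes at most the values $p(z_1),\dots,p(z_n)$; on $Z\setminus K$ it vanishes by definition of support. Consequently $p(Z)\subseteq\{0,p(z_1),\dots,p(z_n)\}$ is finite.

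There is no substantive obstacle. The only point worth flagging is \emph{why} compactness of the support cannot be dropped in $(1)\Rightarrow(2)$: on a discrete infinite space such as $\N$ every function is locally constant, yet the identity map takes infinitely many values. Compactness of $\supp p$ is precisely what reduces the a priori infinite open cover $\{U_z\}_{z\in K}$ to a finite one, and it is the sole ingredient distinguishing the two directions.
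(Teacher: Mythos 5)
Your proof is correct and follows essentially the same route as the paper: for $(2)\Rightarrow(1)$ the finitely many level sets are closed by continuity and hence clopen as complements of finite unions of closed sets, and for $(1)\Rightarrow(2)$ compactness of the support turns the cover by neighborhoods of constancy into a finite subcover. You merely spell out the covering argument that the paper leaves implicit.
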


\begin{proof}
Suppose that $p$ is locally constant. Then the compactness of $\supp(p)$ implies that the locally constant function $p|_{\supp(p)}$ takes only finitely many values. Conversely, if a continuous function $p: Z\to \C$ takes only finitely many values, then $p^{-1}(z)$, which is closed as $\{z\}$ is closed, is always open. Thus, $p$ is locally constant. 
\end{proof}

Note that a continuous function $p: Z\to \C$ with only finitely many values need not have compact support unless $Z$ itself is compact. Just consider the constant function $1$.

\begin{remark}\label{rem:K'S+X}{\rm
Let $\lambda\in\mathcal K'(\mathfrak X\times \Omega)$ and $u\in C^\mathrm{lc}\big(\mathfrak X, \mathcal K'(\Omega)\big)=C\big(\mathfrak X, \mathcal K'(\Omega)\big)$. Then the formulas
\begin{eqnarray*}
\forall F\in C_c^\mathrm{lc}(\mathfrak X\times \Omega):\quad \langle \lambda_u,F\rangle
&:=& \sum_{x\in \mathrm{supp} (F^\Omega)}\langle u(x), F^\Omega(x)\rangle\\
\forall x\in \mathfrak X, f\in C^\mathrm{lc}(\Omega):\quad \langle u_\lambda(x),f\rangle
&:=&\langle \lambda, \delta_x\otimes f\rangle,
\end{eqnarray*}
where $F^\Omega(x)(\omega):=F(x,\omega)$ and $\delta_x(y)=1$ if $y=x$ and $0$ otherwise, define mutually inverse maps between  $\mathcal K'(\mathfrak X\times \Omega)$ and $C^\mathrm{lc}\big(\mathfrak X, \mathcal K'(\Omega)\big)$. In fact:

\begin{eqnarray*}
\langle \lambda_{u_\lambda},F\rangle
&=&\sum_{y\in\mathrm{supp}(F^\Omega)}\langle u_\lambda(y),F^\Omega(y)\rangle
\ =\ \sum_{y\in\mathrm{supp}(F^\Omega)}\langle \lambda,\delta_y\otimes F^\Omega(y)\rangle \\
&=&\Bigg\langle \lambda,\sum_{y\in\mathrm{supp}(F^\Omega)}\delta_y \otimes F^\Omega(y)\Bigg\rangle 
\ =\ \left\langle \lambda,F\right\rangle 
\end{eqnarray*}
and, in view of $(\delta_x\otimes f)^\Omega(y)= \delta_x(y)f$,
$$\langle u_{\lambda_u}(x),f\rangle\ =\ \langle \lambda_u,\delta_x\otimes f\rangle\ =\sum_{y\in \mathrm{supp}(\delta_x)} \langle u(y), \delta_x(y)f\rangle \ =\ \langle u(x), f\rangle.$$

}
\end{remark}

Next we turn to finitely additive measures on compact totally disconnected spaces. We fix such a space and denote it by $Z$.

\begin{definition}[Finitely additive measures]{\rm
Let $\Sigma$ denote the set of \emph{clopen}, i.e. open and closed, subsets of $Z$. 
A \emph{finitely additive measure} is a map $\mu:\Sigma\to \mathbb C$ such that
\begin{itemize}
\item[(a)] $\mu(\emptyset)=0$.
\item[(b)] $\forall U,U'\in \Sigma:\ \mu(U\cup U')+\mu(U\cap U')= \mu(U)+\mu(U')$.
\end{itemize}
We denote the space  of finitely additive measures on $Z$ by $\mathcal{M}_{\mathrm{fa}}(Z)$. 
}
\end{definition}

Note that (b) is equivalent to 
\begin{itemize}
\item[(b')] $\forall U,U'\in \Sigma \text{ disjoint}:\ \mu(U\cup U')= \mu(U)+\mu(U')$.
\end{itemize}
This follows immediately by writing $U\cup U'=U\cup\big(U'\setminus(U\cap U')\big)$.

\begin{remark}[Clopen sets]\label{rem:clopensets}{\rm
For $U\subset Z$ we have
$$U\in\Sigma\quad \Longleftrightarrow\quad \mathbf{1}_U \in C^{\mathrm{lc}}(Z).$$

In fact, $U=\mathbf{1}_U^{-1}(1)$ and $U^c:=Z\setminus U=\mathbf{1}_U^{-1}(0)$, so continuity of  $\mathbf{1}_U$ implies that $U$ and $U^c$ are closed. Conversely, since $1$ and $0$ are the only possible values for $\mathbf{1}_U$ we also see that $U\in \Sigma$ implies that $\mathbf{1}_U$ is continuous with discrete image and hence locally constant. 
}
\end{remark}

\begin{proposition}\label{prop-Dual1}
For $\mu\in \mathcal{M}_{\mathrm{fa}}(Z)$ the map
$$\langle \mu, \bullet\rangle: C_c^{\mathrm{lc}}(Z)\to\mathbb C,\quad  p\mapsto \int_\Omega p\, d\mu:=\sum_{z\in \mathbb C} z\mu\big(p^{-1} (z)\big)$$
is well defined and linear. 
\end{proposition}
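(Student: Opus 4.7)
The plan is to verify well-definedness first and then deduce linearity from finite additivity via a common refinement argument.

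\textbf{Well-definedness.} Since $p \in C_c^{\mathrm{lc}}(Z)$, Proposition~\ref{prop:finite image} gives that $p$ takes only finitely many values, so the sum $\sum_{z\in\mathbb{C}} z\,\mu(p^{-1}(z))$ has only finitely many nonzero summands to consider. It remains to see that each level set $p^{-1}(z)$ with $z \neq 0$ lies in $\Sigma$, i.e.\ is clopen. Closedness follows from continuity of $p$ since $\{z\}$ is closed in $\mathbb{C}$; openness follows from local constancy (or equivalently, from the fact that the finite image $p(Z)$ carries the discrete topology, so $\{z\}$ is also open in $p(Z)$). For $z\neq 0$ we additionally have $p^{-1}(z)\subset\supp(p)$, so $p^{-1}(z)$ is a clopen subset of $Z$ and $\mu(p^{-1}(z))$ is defined. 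The term with $z=0$ contributes $0$ regardless, so the expression is meaningful.

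\textbf{Linearity.} Given $p,q\in C_c^{\mathrm{lc}}(Z)$ and $\alpha,\beta\in\mathbb{C}$, let $V_p\subset\mathbb{C}$ and $V_q\subset\mathbb{C}$ be the (finite) images of $p$ and $q$, and set $A_a:=p^{-1}(a)$, $B_b:=q^{-1}(b)$ for $a\in V_p$, $b\in V_q$. The sets $\{A_a\cap B_b\}_{a\in V_p,\,b\in V_q}$ form a finite partition of $Z$ into clopen sets, refining the level-set decompositions of both $p$ and $q$. On $A_a\cap B_b$ the function $\alpha p+\beta q$ is constantly equal to $\alpha a+\beta b$. Writing the defining sum in terms of this common refinement and using finite additivity of $\mu$ in the form (b'), together with $\mu(A_a)=\sum_{b\in V_q}\mu(A_a\cap B_b)$ and the analogous identity for $B_b$, one obtains
\begin{align*}
\langle\mu,\alpha p+\beta q\rangle
&= \sum_{a\in V_p,\,b\in V_q} (\alpha a+\beta b)\,\mu(A_a\cap B_b)\\
&= \alpha\sum_{a\in V_p} a\,\mu(A_a) + \beta\sum_{b\in V_q} b\,\mu(B_b)
= \alpha\langle\mu,p\rangle+\beta\langle\mu,q\rangle.
\end{align*}

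\textbf{Main obstacle.} The only delicate point is the bookkeeping in the refinement step: one must make sure that when $\alpha a+\beta b=0$ for some $(a,b)$ the corresponding term legitimately drops out of the sum defining $\langle\mu,\alpha p+\beta q\rangle$ (which only ranges over nonzero values in the image), and that when several pairs $(a,b)$ yield the same value of $\alpha a+\beta b$ the level set $(\alpha p+\beta q)^{-1}(c)$ is the disjoint union of the corresponding $A_a\cap B_b$, so that finite additivity collapses the sum correctly. Once this (purely combinatorial) check is made, linearity follows immediately.
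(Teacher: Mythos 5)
Your proof is correct and follows essentially the same route as the paper: well-definedness via Proposition~\ref{prop:finite image} together with the observation that level sets of locally constant continuous functions are clopen, and linearity via the common refinement $p^{-1}(a)\cap q^{-1}(b)$ collapsed by finite additivity. The extra bookkeeping you flag (coincidences $\alpha a+\beta b=c$ and the vanishing value) is exactly what the paper's chain of equalities silently absorbs, so your write-up is if anything slightly more careful.
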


\begin{proof}
By Proposition~\ref{prop:finite image} the sum in the definition of $\langle \mu, \bullet\rangle$ is finite and the $p^{-1} (z)$ are in $\Sigma$. Thus $\langle \mu, \bullet\rangle$ is well-defined.

For the linearity let $p,q\in C^{\mathrm{lc}}(Z)$ and $\alpha,\beta\in\mathbb C$. Then
\begin{align*}
\langle \mu, \alpha p+\beta q\rangle
&= \sum_{z\in\mathbb C} z \mu \big((\alpha p+\beta q)^{-1} (z)\big)\\
&= \sum_{a,b\in\mathbb C} (\alpha a+\beta b) \mu \big(p^{-1} (a)\cap q^{-1}(b)\big)\\
&= \alpha\sum_{a,b \in\mathbb C} a \mu \big(p^{-1} (a)\cap q^{-1}(b)\big)+
\beta\sum_{a,b\in\mathbb C}  b \mu \big(p^{-1} (a)\cap q^{-1}(b)\big)\\
&= \alpha\sum_{a \in\mathbb C} a \mu \big(p^{-1} (a)\big)+
\beta\sum_{b\in\mathbb C}  b \mu \big( q^{-1}(b)\big)\\
&=\alpha\langle \mu,  p\rangle+ \beta\langle \mu,  q\rangle.
\end{align*}
\end{proof}

\begin{proposition}\label{prop:alg-dual-Omega} For  $\lambda\in \mathcal K'(Z)$ the map 
$$\mu_\lambda: \Sigma\to \mathbb C,\quad U\mapsto \lambda(\mathbf{1}_U)$$
is an element of $\mathcal{M}_{\mathrm{fa}}(Z)$.
\end{proposition}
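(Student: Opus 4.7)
The plan is to verify the two axioms of a finitely additive measure directly, after first checking that the expression $\lambda(\mathbf{1}_U)$ is actually defined. The whole proof should be quite short, since all we need is the linearity of $\lambda$ together with a pointwise identity for indicator functions.

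First I would observe that $\mu_\lambda$ is well-defined on $\Sigma$. Since $Z$ is compact, $C_c^{\mathrm{lc}}(Z) = C^{\mathrm{lc}}(Z)$, and by Remark~\ref{rem:clopensets} the assumption $U \in \Sigma$ is equivalent to $\mathbf{1}_U \in C^{\mathrm{lc}}(Z)$. Hence $\mathbf{1}_U$ lies in the domain of the linear functional $\lambda$ for every $U \in \Sigma$, and the evaluation $\lambda(\mathbf{1}_U)$ makes sense.

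Next I would verify axiom (a): since $\mathbf{1}_\emptyset = 0 \in C^{\mathrm{lc}}(Z)$ and $\lambda$ is linear, $\mu_\lambda(\emptyset) = \lambda(0) = 0$. For axiom (b), the crucial input is the pointwise identity
$$\mathbf{1}_{U \cup U'} + \mathbf{1}_{U \cap U'} = \mathbf{1}_U + \mathbf{1}_{U'},$$
which is valid for arbitrary subsets $U, U' \subset Z$ and is checked case by case according to whether a given point lies in both, exactly one, or neither of $U, U'$. Since $\Sigma$ is closed under finite unions and intersections (clopen sets form a Boolean algebra), all four indicator functions appearing in this identity belong to $C^{\mathrm{lc}}(Z)$. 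Applying the linear functional $\lambda$ to both sides yields
$$\mu_\lambda(U \cup U') + \mu_\lambda(U \cap U') = \mu_\lambda(U) + \mu_\lambda(U'),$$
which is exactly axiom (b).

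There is no substantial obstacle: the argument is a two-line linearity check once one has the pointwise identity for indicator functions and the identification from Remark~\ref{rem:clopensets} that makes $\mathbf{1}_U$ an admissible test function against $\lambda$.
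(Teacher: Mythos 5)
Your proposal is correct and takes essentially the same approach as the paper: both proofs reduce the measure axioms to the linearity of $\lambda$ applied to an identity of indicator functions, after noting via Remark~\ref{rem:clopensets} that $\mathbf{1}_U$ is an admissible argument. The only cosmetic difference is that you verify axiom (b) directly through the inclusion--exclusion identity $\mathbf{1}_{U\cup U'}+\mathbf{1}_{U\cap U'}=\mathbf{1}_U+\mathbf{1}_{U'}$, whereas the paper checks the equivalent disjoint-union form (b$'$) using $\mathbf{1}_{U\cup U'}=\mathbf{1}_U+\mathbf{1}_{U'}$.
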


\begin{proof}
As $\mathbf{1}_\emptyset =0$ we see that $\mu_\lambda(\emptyset)=\lambda(\mathbf{1}_\emptyset)=0$. Now suppose that $U, U'\in \Sigma$ are disjoint. Then $\mathbf{1}_{U\cup U'} = \mathbf{1}_U+\mathbf{1}_{U'}$, so that
$$\mu_\lambda(U\cup U')
=\lambda(\mathbf{1}_{U\cup U'})
=\lambda(\mathbf{1}_U)+\lambda(\mathbf{1}_{U'})
=\mu_\lambda(U)+\mu_\lambda(U').$$ 
\end{proof}

\begin{proposition}[Finitely additive measures as linear functionals]\label{prop:distributions}
The map $\mathcal K'(Z)\to \mathcal{M}_{\mathrm{fa}}(Z), \ \lambda\mapsto \mu_\lambda$ is a linear isomorphism.
\end{proposition}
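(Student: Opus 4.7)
The plan is to exhibit an inverse explicitly. Define $\Psi \colon \mathcal{M}_{\mathrm{fa}}(Z) \to \mathcal{K}'(Z)$ by $\Psi(\mu) := \langle \mu, \bullet\rangle$, the linear functional from Proposition~\ref{prop-Dual1}. I would then verify that $\lambda \mapsto \mu_\lambda$ and $\Psi$ are mutually inverse linear maps, which settles both injectivity and surjectivity at once.

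Linearity of $\lambda \mapsto \mu_\lambda$ is immediate from its definition $\mu_\lambda(U)=\lambda(\mathbf{1}_U)$ and the linearity of evaluation. For the identity $\mu_{\Psi(\mu)} = \mu$, let $U \in \Sigma$. Since $\mathbf{1}_U$ takes only the values $0$ and $1$, with $\mathbf{1}_U^{-1}(1)=U$, the defining formula in Proposition~\ref{prop-Dual1} gives
$$\mu_{\Psi(\mu)}(U) \;=\; \Psi(\mu)(\mathbf{1}_U) \;=\; \langle \mu, \mathbf{1}_U \rangle \;=\; 1\cdot \mu(U) + 0 \cdot \mu(Z \setminus U) \;=\; \mu(U).$$

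For the reverse identity $\Psi(\mu_\lambda) = \lambda$, pick $p \in C_c^{\mathrm{lc}}(Z)$. By Proposition~\ref{prop:finite image}, $p$ takes only finitely many values, so by Remark~\ref{Omega-remark} we have the finite decomposition $p = \sum_{z \in \mathbb{C}} z \, \mathbf{1}_{p^{-1}(z)}$. Using linearity of $\lambda$ and the definition of $\mu_\lambda$,
$$\lambda(p) \;=\; \sum_{z} z \,\lambda(\mathbf{1}_{p^{-1}(z)}) \;=\; \sum_{z} z \, \mu_\lambda(p^{-1}(z)) \;=\; \langle \mu_\lambda, p \rangle \;=\; \Psi(\mu_\lambda)(p).$$
Hence $\Psi(\mu_\lambda)=\lambda$, so $\lambda \mapsto \mu_\lambda$ is bijective.

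There is really no hard step here; the argument is entirely formal bookkeeping. The only place where one has to pay attention is the finiteness of the sums defining $\langle \mu, p\rangle$ and the decomposition of $p$: both rely crucially on the compact support assumption built into $C_c^{\mathrm{lc}}(Z)$, invoked via Proposition~\ref{prop:finite image}. Without compact support one would have to worry about the summability of $\sum_z z\,\mu(p^{-1}(z))$, but that issue is avoided by design.
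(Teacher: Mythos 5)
Your proof is correct and follows essentially the same route as the paper: both exhibit $\mu\mapsto\langle\mu,\bullet\rangle$ from Proposition~\ref{prop-Dual1} as the explicit inverse and verify the two composites via the level-set decomposition $p=\sum_z z\,\mathbf{1}_{p^{-1}(z)}$ and the evaluation $\langle\mu,\mathbf{1}_U\rangle=\mu(U)$. (Only a small remark: since $Z$ is assumed compact here, $C_c^{\mathrm{lc}}(Z)=C^{\mathrm{lc}}(Z)$, so the compact-support caveat at the end is automatic rather than an extra hypothesis.)
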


\begin{proof} The map is well defined by Proposition~\ref{prop:alg-dual-Omega}. Its linearity is obvious. Apply Proposition~\ref{prop-Dual1} and Proposition~\ref{prop:finite image} to $\mu_\lambda$ to see that 
$$\langle\mu_\lambda, p\rangle
= \sum_{z\in\mathbb C} z \mu_\lambda\big((p^{-1}(z)\big) 
= \sum_{z\in\mathbb C} z \lambda(\mathbf{1}_{p^{-1}(z)}) 
= \lambda\Big(\sum_{z\in\mathbb C} z \mathbf{1}_{p^{-1}(z)}\Big) 
\overset{\text{\ref{Omega-remark}}}{=}\lambda(p)
$$
for $p\in C_c^{\mathrm{lc}}(Z)$.
Conversely, for $U\in \Sigma$ we have
$$\langle \mu, \mathbf{1}_U\rangle
=\sum_{z\in\mathbb C} z \mu\big((\mathbf{1}_U^{-1}(z)\big)
= \mu(U).$$
Thus $\mu\mapsto \langle \mu,\bullet\rangle:\mathcal{M}_{\mathrm{fa}}(Z) \to \mathcal K_{\mathrm{fin}}'(Z)$ and  $\lambda\mapsto \mu_\lambda: \mathcal K_{\mathrm{fin}}'(Z)\to \mathcal{M}_{\mathrm{fa}}(Z)$ are mutually inverse.   
\end{proof}

From now on we will identify $\mathcal K'(Z)$ and $\mathcal M_{\mathrm{fa}}(Z)$ for compact totally disconnected spaces. In particular, we have $\mathcal K'(\Omega)=\mathcal M_{\mathrm{fa}}(\Omega)$ for $\Omega$ being the boundary of our tree.

In order to describe  the nature of $\mathcal{M}_{\mathrm{fa}}(\Omega)$ 
in more detail we introduce the map $\partial_+:\vec{\mathfrak E}\to \mathcal P(\Omega)$ which associates with $\vec e \in \vec{\mathfrak E}$ all boundary points $\omega\in \Omega$  which can be reached through $\vec e $. Here $\mathcal P(\Omega)$ is the power set of $\Omega$. Reversing the orientation yields the map $\partial_-:\vec{\mathfrak E}\to \mathcal P(\Omega)$.

\begin{remark}\label{vecE-observation} We have the following disjoint decompositions of $\Omega$.
\begin{itemize}
\item[(i)] $\forall \vec e\in \vec{\mathfrak E}:\quad \Omega=\partial_+\vec e \cup \partial_- \vec e$.
\item[(ii)] $\forall x\in \mathfrak X:\quad \Omega=\bigcup_{\iota(\vec e) =x} \partial_+\vec e$. 
\end{itemize} 
\end{remark}

\begin{remark}\label{Sigma-observation} Recall the set $\Sigma\subseteq \mathcal P(\Omega)$ consisting of open and closed subsets of $\Omega$.
\begin{itemize}
\item[(i)] $\mathcal B:= \{\partial_+ \vec e\mid \vec e\in \vec{\mathfrak E}\}\subseteq \Sigma$ is a basis for the topology on $\Omega$.

\item[(ii)] $\emptyset, \Omega\in \Sigma$.

\item[(iii)] $U,U'\in\Sigma$ implies $U\cap U'\in \Sigma$ and $U\cup U'\in \Sigma$.
\end{itemize} 
\end{remark}

\begin{definition}{\rm
Let ${\vec e\,}^\text{op}\in \vec{\mathfrak E}$ denote the edge $\vec e\in \vec{\mathfrak E}$ with the opposite orientation.
$$L(\vec{\mathfrak E}):=\Big\{ F: \vec{\mathfrak E}\to \mathbb C\ \Big|\  \exists z\in \mathbb C, \forall x\in \mathfrak X,\forall \vec e \in \vec{\mathfrak E}:  F(\vec e)+F({\vec e\,}^{\mathrm{op}})=z= \sum_{\iota(\vec e\,') =x} F(\vec e\,')\Big\}.$$
}
\end{definition}
\begin{remark}{\rm
  We can describe the space $L(\vec{\mathfrak E})$ using only half of the edges and simplifying the local conditions. For that we fix a base point $o \in \mathfrak X$ and consider only edges pointing away from $o$. We put:
\begin{align*}
  \vec{\mathfrak E}_o & :=
  \{
  \vec e\in \vec{\mathfrak E}
  \mid \vec e \text{\ points away from } o
                        \}
  \\
  L( \vec{\mathfrak E}_o ) & :=
                             \Big\{ F: \vec{\mathfrak E}_o \to \mathbb C\ \Big|\ \forall \vec e \in \vec{\mathfrak E}_o: F( \vec e ) = \sum_{\iota({\vec e\,}') = \tau( \vec e)} F( {\vec e\,}' ) \Big\}                      
\end{align*}
Then restriction to $\vec{\mathfrak E}_o$ defines a linear isomorphism from $L(\vec{\mathfrak E})$ to $L(\vec{\mathfrak E}_o)$. To construct the inverse, one needs to figure out the value of $z\in\C$ from the definition of $L(\vec{\mathfrak E})$, but that has to be $z=\sum_{\iota(\vec e)=o}F(\vec e)$. From $z$, one finds the values for $F({\vec e\,}^\text{op})$ with $\vec e\in\vec{\mathfrak E}_o$.
}
\end{remark}

\begin{remark}{\rm
Composing $\partial_+$ with $\mu\in \mathcal{M}_{\mathrm{fa}}(\Omega)$ gives a map 
$$\vec{\mu}:\vec{\mathfrak E}\to \mathbb C,\quad \vec e \mapsto \mu\big(\partial_+(\vec e )\big).$$ 
It is our goal is to show that $\mu\to\vec{\mu}$ is linear bijection between  $\mathcal{M}_{\mathrm{fa}}(\Omega)$ and $L(\vec{\mathfrak E})$.
To that end we first note that Remark~\ref{vecE-observation} implies the following \emph{compatibility conditions} for edges at a vertex $x$ of $\mathfrak T$:

\begin{eqnarray}\label{eq:comp1}
\sum_{x=\iota(\vec e) } \vec{\mu} (\vec e) &=& \mu(\Omega)\\ \label{eq:comp2}
\vec{\mu} (\vec e) + \vec{\mu} ({\vec e\,}^\text{op} ) &=& \mu(\Omega).
\end{eqnarray}
Thus for each $\mu\in \mathcal{M}_{\mathrm{fa}}(\Omega)$ we have $\vec{\mu}\in L(\vec{\mathfrak E})$. 

A finitely additive measure $\mu\in \mathcal{M}_{\mathrm{fa}}(\Omega)$ is completely determined by its values on a basis of the topology. Therefore the map 
$\mathcal{M}_{\mathrm{fa}}(\Omega)\to L(\vec{\mathfrak E}),\ \mu\mapsto \vec{\mu}$
is in fact injective.
}
\end{remark}

\begin{theorem}\label{thm:fa-measures-Omega} The map
$\mathcal{M}_{\mathrm{fa}}(\Omega)\to L(\vec{\mathfrak E}), \ \mu\mapsto \vec{\mu}$
is a linear isomorphism.
\end{theorem}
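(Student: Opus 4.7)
The remark preceding the theorem already established that $\mu \mapsto \vec{\mu}$ is linear, takes values in $L(\vec{\mathfrak E})$, and is injective. It therefore suffices to prove surjectivity: given $F \in L(\vec{\mathfrak E})$, construct $\mu \in \mathcal{M}_{\mathrm{fa}}(\Omega)$ with $\vec{\mu} = F$.

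My plan is to fix a basepoint $o \in \mathfrak X$ and exploit that for each $n \in \mathbb N$ the sets $\{\partial_+ \vec e : \vec e \in \vec{\mathfrak E}_o,\ d(o, \tau(\vec e)) = n\}$ partition $\Omega$ into finitely many clopens (this is Remark~\ref{vecE-observation}(ii) iterated, using that $\partial_+ \vec e = \Omega_o(\tau(\vec e))$ for $\vec e \in \vec{\mathfrak E}_o$). Since every basic clopen $\partial_+ \vec e$ is either one such sector or the complement of one (according as $\vec e$ points away from or towards $o$), and every $U \in \Sigma$ is by compactness a finite union of basic clopens, both $U$ and $\Omega \setminus U$ become disjoint unions of depth-$n$ sectors once $n$ exceeds the depths of the finitely many basic clopens involved. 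For such $n$ I write $U = \bigsqcup_i \partial_+ \vec e_i$ with $\vec e_i \in \vec{\mathfrak E}_o$ and set $\mu(U) := \sum_i F(\vec e_i)$.

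The main work lies in proving that this is independent of $n$. The key refinement identity is: if $\vec e \in \vec{\mathfrak E}_o$ with $\tau(\vec e) = y$ and $\vec e_1', \dots, \vec e_k' \in \vec{\mathfrak E}_o$ are the edges with initial vertex $y$, then
$$F(\vec e) = \sum_{j=1}^k F(\vec e_j').$$
This combines both conditions defining $L(\vec{\mathfrak E})$: the edges with initial vertex $y$ are $\vec e^{\mathrm{op}}$ together with $\vec e_1', \dots, \vec e_k'$, so the vertex relation gives $F(\vec e^{\mathrm{op}}) + \sum_j F(\vec e_j') = z$, while the edge-flip relation gives $F(\vec e^{\mathrm{op}}) = z - F(\vec e)$; subtracting these yields the claim. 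Geometrically this is the statement that refining the depth-$n$ sector $\partial_+ \vec e$ into its depth-$(n+1)$ sub-sectors $\partial_+ \vec e_j'$ preserves the $F$-sum; iterating yields independence of depth.

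Finite additivity follows immediately by choosing a common depth for two disjoint clopens, and $\mu(\emptyset) = 0$ is the empty sum. To verify $\vec{\mu} = F$: the depth-$1$ computation gives $\mu(\Omega) = \sum_{\iota(\vec e') = o} F(\vec e') = z$; for $\vec e \in \vec{\mathfrak E}_o$ we have $\mu(\partial_+ \vec e) = F(\vec e)$ directly at depth $d(o, \tau(\vec e))$; and for $\vec e$ pointing towards $o$, writing $\partial_+ \vec e = \Omega \setminus \partial_+ \vec e^{\mathrm{op}}$ and invoking the edge-flip relation yields $\mu(\partial_+ \vec e) = z - F(\vec e^{\mathrm{op}}) = F(\vec e)$.
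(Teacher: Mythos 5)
Your proof is correct, and it reaches the construction by a somewhat different route than the paper. Both arguments define $\mu(U)$ by summing $F$ over a decomposition of $U$ into sectors $\partial_+\vec e$ with $\vec e$ pointing away from a fixed basepoint $o$, and both ultimately rest on the same algebraic identity $F(\vec e)=\sum_{\iota(\vec e\,')=\tau(\vec e)}F(\vec e\,')$ for $\vec e\in\vec{\mathfrak E}_o$ (which in the paper appears as Case~1 of its key claim). The difference is in how well-definedness and the verification $\vec\mu=F$ are organized. The paper decomposes $U$ into the \emph{maximal} sectors it contains and proves as its central claim that the resulting $\mu$ is independent of the basepoint; it then gets $\vec\mu(\vec e)=F(\vec e)$ for an arbitrary oriented edge by moving the basepoint to $\iota(\vec e)$, at the cost of a slightly fiddly case analysis when passing to an adjacent basepoint. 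You instead fix $o$ once and for all, decompose $U$ into the uniform depth-$n$ partition for large $n$, prove invariance under refining $n\mapsto n+1$, and recover $\vec\mu(\vec e)=F(\vec e)$ for inward-pointing edges by complementation together with the relation $F(\vec e)+F(\vec e^{\,\mathrm{op}})=z=\mu(\Omega)$. Your version trades the basepoint-change argument for the complement trick, which is arguably cleaner; the paper's basepoint-independence is a marginally stronger intermediate statement but is not needed once one observes, as you do, that every clopen set and every basic sector can be resolved at a common depth relative to a single basepoint. One cosmetic caveat: if the tree has leaves, some depth-$n$ sectors $\partial_+\vec e$ are empty; you should either discard these from the partition or note that the refinement identity forces $F(\vec e)=0$ on them, so that $\mu(\emptyset)=0$ and the disjointness bookkeeping in your additivity argument goes through. (The paper's proof glosses over the same point.)
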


\begin{proof}
It only remains to show that for each $f\in L(\vec{\mathfrak E})$ we can find a $\mu\in \mathcal{M}_{\mathrm{fa}}(\Omega)$ such that $\vec{\mu}=f$. 

Fix a base point $o\in \mathfrak X$ and define 
$$\vec{\mathfrak E}_o(U):= \{\vec e\in \vec{\mathfrak E}_o\mid \partial_+\vec e\subseteq U\}$$
for $U\in \Sigma$. Note that $\vec{\mathfrak E}_o(U)$ is partially ordered by
$$\vec e\le {\vec e\,}'\quad :\Longleftrightarrow\quad  \partial_+ \vec e\subseteq \partial_+ {\vec e\,}'.$$
Moreover, $\vec{\mathfrak E}_o(U)$ has a finite set $\max(\vec{\mathfrak E}_o(U))$ of  maximal elements and 
$$U=\bigcup_{\vec e\in \max(\vec{\mathfrak E}_o(U))} \partial_+ \vec e.$$ 
This union is disjoint since all $\vec e$ point away from $o$. We set 
$$\mu(U):= \sum_{\vec e\in \max(\vec{\mathfrak E}_o(U))} f(\vec e)$$
and note that $\mu$ is finitely additive. In fact, $\vec{\mathfrak E}_o(\emptyset) = \emptyset$ so that $\mu(\emptyset)=0$, and $U\cap U'=\emptyset$ implies $\vec{\mathfrak E}_o(U)\cap \vec{\mathfrak E}_o({U'})=\emptyset$. \\
 
\noindent
{\bf Claim}:  $\mu$ is independent of the choice of base point.\\ 

This claim proves the theorem since setting $o:=\iota(\vec e)$ for a fixed $\vec e\in \vec{\mathfrak E}$ gives $\vec{\mu}(\vec e)=\mu(\partial_+\vec e)=f(\vec e)$.

To prove the claim we may assume that $o$ and $o'$ are adjacent vertices and $\vec g\in \vec{\mathfrak E}$ points from $o$ to $o'$. Let $\vec h_1, \vec h_2, \ldots$ be the edges at $o'$ different from $\vec g^{\mathrm{op}}$ oriented in such a way that $o'=\iota(\vec h_j)$. Then we have $\partial_+ \vec g =\cup_j \partial_+ \vec h_j$ (disjoint union). Therefore we have
\begin{eqnarray*}
\vec g\in \max(\vec{\mathfrak E}_o(U))
&\Longleftrightarrow&
\vec g\in \vec{\mathfrak E}_o(U)\\
&\Longleftrightarrow&
\forall j:\ \vec h_j\in \vec{\mathfrak E}_{o'}(U)\\
&\Longleftrightarrow&
\forall j:\ \vec h_j\in \max(\vec{\mathfrak E}_{o'}(U)).
\end{eqnarray*}  
We decompose any $U\in \Sigma$ as $U= U_+\cup U_-$, where $U_\pm:= U\cap \partial_\pm \vec g$ and show that 
$\mu_o(U_\pm)=\mu_{o'}(U_\pm)$,
where $\mu_{o}$ denote the $\mu$ constructed from $f$ for the basepoint $o$. Then 
$$\mu_o(U)=\mu_o(U_+)+\mu_o(U_-)=\mu_{o'}(U_+)+\mu_{o'}(U_-)=\mu_{o'}(U)$$
and the claim follows after considering the following cases.

\begin{description}
\item[Case 1] $\partial_+ \vec g= U_+$. \\
In this case we have $\mu_o(U_+)= f(\vec g)$ and $\mu_{o'}(U_+)=\sum_j f(\vec h_j)$, but
$$f(\vec g)= \Big(f({\vec g\,}^{\mathrm{op}})+\sum_j f(\vec h_j)\Big)-f({\vec g\,}^{\mathrm{op}})= \sum_j f(\vec h_j).$$ 
\item[Case 2] $\partial_+ \vec g \not=U_+$. \\
In this case $\vec{\mathfrak E}_o({U_+})= \vec{\mathfrak E}_{o'}({U_+})$ and hence again $\mu_o(U_+)=\mu_{o'}(U_+)$.
\end{description}
The equalities $\mu_o(U_-)=\mu_{o'}(U_-)$ are shown in the same way. This concludes the proof.
\end{proof}

\begin{example}[Dirac measures]\label{ex:Dirac measure}{\rm
For $\omega\in \Omega$ the point evaluation $\mathrm{ev}_\omega: C^{\mathrm{lc}}(\Omega)\to \mathbb C, \ p\mapsto p(\omega)$ is linear, hence by Proposition~\ref{prop:alg-dual-Omega} defines a finitely additive measure.  It is the \emph{Dirac measure}
$$\delta_\omega: \Sigma\to \mathbb C,\quad U\mapsto \begin{cases} 1&\omega\in U,\\ 0& \omega\not\in U.\end{cases}$$ 
The corresponding function $\vec\delta_\omega$ is given by
$$\vec\delta_\omega(\vec e)=\begin{cases}1&\omega\in \partial_+(\vec e)\\
0&\omega\in\partial_-(\vec e).
\end{cases}
$$
}
\end{example}

\begin{example}[Rotation invariant measures for regular trees]\label{ex:invariant measure}{\rm
Suppose that $\mathfrak T$ is regular of degree $q+1$. Fix a point $x\in\mathfrak X$. Then the function $\vec \mu_{x}:\vec{\mathfrak E}\to \C$, which is defined by
$$\vec\mu_{x}(\vec e):=
\begin{cases} \frac{q^{-d(x,\iota(\vec e))}}{q+1}&\vec e\text{ points away from } x\\
1-\frac{q^{-d(x,\tau(\vec e))}}{q+1}&\vec e\text{ points to } x,
\end{cases}
$$
satisfies 
$\vec\mu_{x}(\vec e)+\vec\mu_{x}({\vec e\,}^{\mathrm{op}})=1$, since $\tau({\vec e\,}^{\mathrm{op}})=\iota(\vec e)$ and $\iota({\vec e\,}^{\mathrm{op}})=\tau(\vec e)$. By definition we find
$$\sum_{x=\iota(\vec e)} \vec\mu_{x}(\vec e)=(q+1)\frac{q^0}{q+1}=1,$$
and, for $x\not=y\in\mathfrak X$,
$$\sum_{y=\iota(\vec e)} \vec\mu_{x}(\vec e)
=q\frac{q^{-d(x,y)}}{q+1}  +\left(1-\frac{q^{-d(x,y)+1}}{q+1} \right)= 1.$$
Thus $\vec\mu_{x}\in L(\vec{\mathfrak E})$. It is clear that  $\vec\mu_{x}$ is invariant under all rotations of $\mathfrak T$ around $x$. It follows that the  corresponding measure $\mu_{x}\in \mathcal{M}_{\mathrm{fa}}(\Omega)$ is invariant under the induced ``rotations'' on $\Omega$. 
}
\end{example}

\section{Laplace eigenfunctions}\label{sec:Laplace}
The \emph{Laplacian} $\Delta$ on $\mathfrak T$ operates on functions $f:\mathfrak X\to\C$ and is given by
$$(\Delta f)(x)=\frac{1}{q_x+1}\sum_{\iota(\vec e)=x} f\big(\tau (\vec e)\big)
.$$

\begin{definition}{\rm For each function $\chi\in \operatorname{Maps}( \mathfrak X; \C )$ we denote the kernel  $\ker(\Delta-\chi)$ of the linear map $\Delta-\chi: \operatorname{Maps}( \mathfrak X; \C )\to \operatorname{Maps}( \mathfrak X; \C ),\ f\mapsto \Delta f-\chi f$ by $\mathcal E_\chi(\mathfrak X)$. If $\chi$ is constant this is simply the space of Laplace eigenfunctions with eigenvalue $\chi$.}
\end{definition}

From a physics perspective $\Delta-\chi$ looks like the Hamiltonian of a free quantum particle in a potential landscape described by $\chi$. We therefore will call $\chi$  the \emph{potential} in the sequel.

\begin{remark}[Poisson kernels and Poisson transforms]\label{ex:Poisson kernel}{\rm 
~
\begin{enumerate}
  \item[(i)] Fix a base point $o\in\mathfrak X$. For $\omega\in\Omega$ and $x\in\mathfrak X$, there exists a unique $y\in\mathfrak X$ such that $[o,\omega[{}\cap [x,\omega[{}=[y,\omega[{}$, and we set $\langle x,\omega\rangle:= d(o,y)-d(x,y)$. Thus, we have a map
    \[
    \langle\cdot,\cdot\rangle:\mathfrak X\times \Omega\to \mathbb Z.
    \]
    Note that for $x\in[o,\omega[{}$ we have $\langle x, \omega\rangle = d(o,x)$; and for $o\in[x,\omega[{}$ we find $\langle x, \omega\rangle = -d(o,x)$.
    
\item[(ii)] For fixed $x\in\mathfrak X$ the map $\langle x,\cdot\rangle:\Omega\to\mathbb Z$ is locally constant in $\omega\in\Omega$. In fact, for any $\vec e$ pointing away from $o$ and $x$, we have
$$\forall \omega,\omega'\in\partial_+(\vec e):\quad \langle x,\omega'\rangle = \langle x,\omega\rangle.$$

\item[(iii)] For any parameter $0\not=z\in\C$ the function $f_{z,\omega}:\mathfrak X\to\C,\ x\mapsto z^{\langle x,\omega\rangle}$ satisfies
\begin{eqnarray*}
(\Delta f_{z,\omega})(x)
&=&\frac{1}{q_x+1}\sum_{\iota(\vec e)=x} f_{z,\omega}\big(\tau(\vec e)\big)\\
&=&\frac{1}{q_x+1}\left(q_x z^{\langle x,\omega\rangle -1} +  z^{\langle x,\omega\rangle +1} \right)\\
&=& \frac{q_x z^{-1} + z}{q_x+1}f_{z,\omega}(x).
\end{eqnarray*}
Thus, if we define for any $z\in\C$ the potential 
$\chi(z): \mathfrak X\ni x\mapsto \frac{z+q_x z^{-1}}{q_x+1}$ then $f_{z,\omega}$ is in $\mathcal E_{\chi(z)}$. In the special case of a regular tree of degree $(q+1)$,  $\chi(z)=\frac{z+q z^{-1}}{q+1}$ is the constant function on the tree and $f_{z,\omega}$ is a Laplace eigenfunction with eigenvalue $\chi(z)$. 

\item[(iv)] The function $f_{z,\omega}$ closely related
with the Dirac measure $\delta_\omega$ from Example~\ref{ex:Dirac measure}: 
If $\vec e=(x,y)$ points toward $\omega$, i.e. $y\in[x,\omega[{}$, then  $f_{z,\omega}(y)=z^{\vec\delta_\omega(\vec e)}f_{z,\omega}(x)$. Thus $\vec \delta_\omega$ describes the growth of $f_{z,\omega}$ when moving in the direction of $\omega$. In particular, given one value of $f_{z,\omega}(o)$ one recovers $f_{z,\omega}$ from $\vec\delta_\omega$. 

\item[(v)] The potential $\chi(z)$ is independent of $\omega$, so we may build new elements in $\mathcal E_{\chi(z)}$ from the $f_{z,\omega}$ by taking linear combinations (keeping $z$ fixed). More generally, for any $\mu\in \mathcal M_{\mathrm{fa}}(\Omega)$ we can set
$$f_{z,\mu}(x):=\int_\Omega f_{z,\omega}\, \mathrm d\mu(\omega) = \langle \mu, f_{z,\bullet}(x)\rangle$$
in view of Proposition~\ref{prop-Dual1} and (ii). Then $f_{z,\mu}\in\mathcal E_{\chi(z)}(\mathfrak X)$. This construction explains why we call the map
$$p_z:\mathfrak X\times \Omega\to\C,\quad (x,\omega)\mapsto f_{z,\omega}(x)$$
the \emph{Poisson kernel} for the parameter $z\in\C$ and the map
$$P_z: \mathcal M_{\mathrm{fa}}(\Omega)\to \mathcal E_{\chi(z)}(\mathfrak X), \quad \mu\mapsto f_{z,\mu}=\int_\Omega p_z(\bullet,\omega)\,\mathrm d\mu(\omega)$$
the \emph{Poisson transform} for the parameter $z\in\C$.
\item[(vi)] $P_z\delta_\omega=f_{z,\omega}$. In fact,
\begin{eqnarray*}
(P_z\delta_\omega)(x)
&=&f_{z,\delta_\omega}(x)
\ =\ \int_\Omega f_{z,\nu}(x)\ \mathrm{d}\delta_\omega(\nu)
\ =\ \langle\delta_\omega, f_{z,\bullet}(x)\rangle
\ =\ f_{z,\omega}(x).  
\end{eqnarray*} 
\end{enumerate}
}
\end{remark}

Our goal in this section is to construct to construct an invervse
\[
  \vec\beta_z :  \mathcal E_{\chi(z)}(\mathfrak X) \rightarrow  \mathcal M_{\mathrm{fa}}(\Omega)
\]
for the Poisson transform $P_z$ and thereby to show that both maps are linear isomorphisms.

\begin{observation}\label{global-boundary-value}
  For $z^2\not\in\{0,1\}$, solving the condition
  \begin{equation}\label{beta}
    zf(\tau(\vec e)) - f(\iota(\vec e)) = (z^2-1)z^{d(o,\iota(\vec e))} \vec \mu(\vec e)\qquad\text{for all }\vec e \in {\mathfrak E}_o
  \end{equation}
  for $\vec \mu$ in terms of $f$ defines a linear map
  \[
    \beta_z : \operatorname{Maps}( \mathfrak X; \C ) \longrightarrow \operatorname{Maps}( {\mathfrak E}_o; \C ).
  \]
\end{observation}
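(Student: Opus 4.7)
The plan is simply to solve the defining equation \pref{beta} explicitly for $\vec\mu(\vec e)$. Since the hypothesis $z^2 \notin \{0,1\}$ guarantees that both $z \neq 0$ and $z^2 - 1 \neq 0$, the coefficient $(z^2-1) z^{d(o,\iota(\vec e))}$ on the right-hand side is nonzero for every $\vec e \in \mathfrak{E}_o$, so dividing through gives the candidate formula
\[
  \vec\mu(\vec e) \; := \; \frac{z f(\tau(\vec e)) - f(\iota(\vec e))}{(z^2-1)\, z^{d(o,\iota(\vec e))}}.
\]

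First I would check that this prescription is well-defined: for each edge $\vec e \in \mathfrak{E}_o$ the right-hand side is an unambiguous complex number determined pointwise by $f$, with no compatibility condition relating the values of $\vec\mu$ on different edges imposed by \pref{beta} (the equation decouples completely over $\vec e$). Therefore the formula produces an element $\vec\mu \in \operatorname{Maps}(\mathfrak{E}_o; \mathbb{C})$, and any $\vec\mu$ satisfying \pref{beta} must coincide with this one. Setting $\beta_z(f) := \vec\mu$ then defines a map, and linearity in $f$ is immediate from the linearity of the numerator in $f$ and the independence of the denominator from $f$.

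There is essentially no obstacle, since the observation is just the rearrangement of an equation that is linear in the unknown scalar $\vec\mu(\vec e)$ with nonzero coefficient under the stated assumption on $z$; the content lies not in this solvability step but in the later claim (presumably the next theorem) that $\beta_z(f)$ actually lies in $L(\vec{\mathfrak E}_o)$ whenever $f \in \mathcal E_{\chi(z)}(\mathfrak X)$ and that the resulting map inverts $P_z$.
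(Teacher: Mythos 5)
Your proof is correct and matches the paper's (implicit) argument: the paper states this as an Observation without proof precisely because, under $z^2\notin\{0,1\}$, the coefficient $(z^2-1)z^{d(o,\iota(\vec e))}$ is nonzero and the equation decouples edge by edge, so dividing through gives the unique $\vec\mu$ and linearity is immediate. Your explicit formula and remarks on well-definedness are exactly the intended content.
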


\begin{lemma}\label{composition}
  For $z^2\not\in\{0,1\}$ and a finitely additive measure $\mu$ on $\Omega$, we have
  \[
    \beta_z( P_z( \mu ) ) = \vec\mu.
  \]
\end{lemma}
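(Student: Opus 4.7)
The approach is a direct computation. Fix $\vec e \in \vec{\mathfrak E}_o$ and write $a=\iota(\vec e)$, $b=\tau(\vec e)$, $n=d(o,a)$. Substituting $f=P_z(\mu)$ into the left-hand side of \pref{beta} and using the linearity of the pairing against a finitely additive measure,
\[
  zP_z(\mu)(b)-P_z(\mu)(a)=\int_\Omega \bigl(z^{\langle b,\omega\rangle+1}-z^{\langle a,\omega\rangle}\bigr)\,d\mu(\omega).
\]
The plan is to split $\Omega=\partial_+\vec e\sqcup \partial_-\vec e$ via Remark~\ref{vecE-observation}(i) and show that the integrand reduces to the locally constant function $z^n(z^2-1)\mathbf{1}_{\partial_+\vec e}$. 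The integral will then evaluate to $z^n(z^2-1)\mu(\partial_+\vec e)=(z^2-1)z^{d(o,a)}\vec\mu(\vec e)$, matching \pref{beta}; since $z\ne 0$ and $z^2\ne 1$, this equation uniquely determines $\vec\mu(\vec e)$ from $f$, yielding $\beta_z(P_z(\mu))(\vec e)=\vec\mu(\vec e)$.

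The first case is straightforward. For $\omega\in\partial_+\vec e$, the edge $\vec e$ points from $o$ towards $\omega$, so $a,b\in[o,\omega[$; applying the defining identity $\langle x,\omega\rangle=d(o,x)$ for $x\in[o,\omega[$ gives $\langle a,\omega\rangle=n$ and $\langle b,\omega\rangle=n+1$. The integrand becomes $z^{n+2}-z^n=z^n(z^2-1)$, as required.

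The second case is where I expect the horocycle-bracket bookkeeping to be the main obstacle. For $\omega\in\partial_-\vec e$, the point $\omega$ lies on the $a$-side of the edge $\{a,b\}$, so $b\notin[o,\omega[$ and the chain $[b,\omega[$ must step back through $a$. Defining $y$ by $[o,\omega[\cap[a,\omega[=[y,\omega[$, the fact that $b\notin[o,\omega[$ while $[b,\omega[=\{b\}\cup[a,\omega[$ forces $[o,\omega[\cap[b,\omega[=[y,\omega[$ as well. Since $y$ lies on the $a$-side of $\{a,b\}$, we have $d(b,y)=d(a,y)+1$, and therefore
\[
  \langle b,\omega\rangle = d(o,y)-d(b,y) = d(o,y)-d(a,y)-1 = \langle a,\omega\rangle-1.
\]
Hence $z^{\langle b,\omega\rangle+1}=z^{\langle a,\omega\rangle}$ and the integrand vanishes identically on $\partial_-\vec e$. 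Combining the two cases finishes the proof.
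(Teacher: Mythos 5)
Your proof is correct and follows essentially the same route as the paper: both split $\Omega=\partial_+\vec e\,\dot\cup\,\partial_-\vec e$, observe that the integrand $z^{1+\langle\tau(\vec e),\omega\rangle}-z^{\langle\iota(\vec e),\omega\rangle}$ vanishes on $\partial_-\vec e$ and equals the constant $(z^2-1)z^{d(o,\iota(\vec e))}$ on $\partial_+\vec e$, and conclude by evaluating the measure of $\partial_+\vec e$. Your case analysis just spells out the shift identities $\langle\tau(\vec e),\omega\rangle=\langle\iota(\vec e),\omega\rangle\pm 1$ that the paper states without proof.
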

\begin{proof}
  We consider an oriented edge $\vec e\in {\mathfrak E}_o$ from the vertex $x=\iota(\vec e)$ to $y=\tau(\vec e)$. Note that
  \(
    \langle y, \omega \rangle = \langle x, \omega \rangle + 1
  \)
  if $\vec e$ points toward $\omega$ and
  \(
    \langle y, \omega \rangle = \langle x, \omega \rangle - 1
  \)
  otherwise. Thus, we obtain:
  \begin{align*}
    zP_z(\mu)(\tau(\vec e)) - P_z(\mu)(\iota(\vec e))
    & = \int_\Omega \left(z^{1+\langle\tau(\vec e),\omega \rangle} - z^{\langle\iota(\vec e),\omega\rangle}\right) \mathrm{d}\mu(\omega)
    \\
    & = \int_{\partial_+\vec e} (z^2-1) z^{\langle \iota(\vec e), \omega \rangle} \mathrm{d} \mu(\omega)
    \\
    & = (z^2-1) z^{d(o,\iota(\vec e))} \int_{\partial_+\vec e} \mathrm{d} \mu(\omega)
    \\
    & = (z^2-1) z^{d(o,\iota(\vec e))} \vec\mu( \vec e )
  \end{align*}
  It follows that $f=P_z(\mu)$ and $\vec\mu$ satisfy condition~(\ref{beta}).
\end{proof}

\begin{lemma}\label{previous}
  Assume $z^2\not\in\{0,1\}$. Let $f:\mathfrak X \rightarrow \C$ and $\vec \mu : {\mathfrak E}_o \rightarrow \C$ satisfy condition~(\ref{beta}), i.e., $\vec\mu=\beta_z(f)$. Then the following are equivalent:
  \begin{enumerate}
    \item
      The function $\vec \mu$ lies in $L({\mathfrak E}_o)$, i.e., it satisfies the compatibility conditions
      \[
      \vec\mu( \vec e ) = \sum_{\iota(\vec g) = \tau(\vec e)}{\vec\mu}(\vec g)\qquad\text{for all }\vec e \in {\mathfrak E}_o.
      \]
    \item
      The function $f$ solves $(\Delta-\chi(z))f=0$  everywhere except possibly at the vertex $o$, i.e., we have
      \[
      \sum_{v=\iota(\vec e)}f(\tau(\vec e)) = (z+q_vz^{-1})f(v)\qquad\text{for all }v\neq o.
      \]
  \end{enumerate}
\end{lemma}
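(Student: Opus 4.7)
The plan is to prove the equivalence edge-by-edge (equivalently, vertex-by-vertex): the compatibility identity of (1) at a fixed edge $\vec e \in \mathfrak E_o$, when rewritten using the defining relation~(\ref{beta}), becomes exactly the eigenfunction identity of (2) at the terminal vertex $v = \tau(\vec e)$.

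First I would note that the map $\vec e \mapsto \tau(\vec e)$ is a bijection between $\mathfrak E_o$ and $\mathfrak X \setminus \{o\}$, so it suffices to prove the equivalence locally around a fixed $v \neq o$. Given such a $v$, let $u$ be its unique neighbor closer to $o$ and let $w_1,\dots,w_{q_v}$ be its other neighbors. Let $\vec e \in \mathfrak E_o$ be the edge from $u$ to $v$ and $\vec g_1, \dots, \vec g_{q_v} \in \mathfrak E_o$ be the edges from $v$ to the $w_i$. Set $n = d(o,u)$, so that $d(o,v) = n+1$.

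The core step is then a routine substitution. The relevant instances of (\ref{beta}) give
\[
  (z^2-1)z^{n}\,\vec\mu(\vec e) = z f(v) - f(u), \qquad
  (z^2-1)z^{n+1}\,\vec\mu(\vec g_i) = z f(w_i) - f(v).
\]
Multiplying the compatibility identity $\vec\mu(\vec e) = \sum_i \vec\mu(\vec g_i)$ by the factor $(z^2-1)z^{n+1}$, which is nonzero because $z^2 \notin \{0,1\}$, and substituting these expressions, converts it into
\[
  z\bigl( z f(v) - f(u) \bigr) = z \sum_{i=1}^{q_v} f(w_i) - q_v f(v).
\]
Rearranging and dividing by $z$ yields $f(u) + \sum_i f(w_i) = (z + q_v z^{-1}) f(v)$, which is exactly the eigenfunction condition (2) at $v$, since $u, w_1, \dots, w_{q_v}$ enumerate all neighbors of $v$. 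Every manipulation is reversible, so the equivalence follows.

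There is no serious obstacle beyond careful bookkeeping: one has to separate the unique parent-edge into $v$ from the $q_v$ child-edges out of $v$ and track the powers of $z$ to arrive at the coefficient $z + q_v z^{-1}$ on the nose. The hypothesis $z^2 \notin \{0,1\}$ is used precisely to invert $(z^2-1)z^{n+1}$, and the exclusion of $v=o$ in (2) reflects the fact that $o$ has no incoming edge in $\mathfrak E_o$, so no instance of (1) constrains $f$ at $o$.
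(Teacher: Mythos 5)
Your proof is correct and follows essentially the same route as the paper: both arguments use the bijection $\vec e \mapsto \tau(\vec e)$ between $\mathfrak E_o$ and $\mathfrak X\setminus\{o\}$ to reduce to a single vertex $v\neq o$, and then convert the compatibility identity into the eigenvalue equation at $v$ by substituting the two relevant instances of condition~(\ref{beta}) and cancelling the nonzero factor $(z^2-1)z^{d(o,v)}$. The only difference is cosmetic: you multiply the compatibility relation by that factor, whereas the paper runs the same reversible chain of equivalences starting from the eigenfunction equation.
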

\begin{proof}
  Note that for a vertex $v$, we have $v\neq o$ if and only if $v=\tau(\vec e)$ for a unique edge $\vec e$ pointing away from $o$. Let $u_0$ denote the initial vertex of that edge and let ${\vec e}_1,\ldots,{\vec e}_{q_v}$ denote the edges with initial vertex $v$ pointing even further away from $o$. Finally let $u_1,\ldots,u_{q_v}$ denote their respective terminal points. Then, we find
  \begin{align*}
    & \sum_{i=0}^{q_v}f(u_i) = (z + q_v z^{-1})f(v)
    \\ \Longleftrightarrow\quad & \sum_{i=1}^{q_v} ( f(u_i) - z^{-1}f(v) ) = zf(v) - f(u_0)
    \\ \Longleftrightarrow\quad & \sum_{i=1}^{q_v} \left( f(\tau({\vec e}_i)) - z^{-1}f(\iota({\vec e}_i) )\right) = zf(\tau(\vec e)) - f(\iota(\vec e) )
    \\ \Longleftrightarrow\quad & \sum_{i=1}^{q_v} (z^2-1)z^{d(o,v)}z^{-1}{\vec\mu}({\vec e}_i) = (z^2-1)z^{d(o,u_0)} {\vec\mu}(\vec e)
    \\ \Longleftrightarrow\quad & \sum_{i=1}^{q_v} {\vec\mu}({\vec e}_i) = {\vec\mu}(\vec e)             
  \end{align*}
  The claimed equivalence follows by letting $v$ range over all vertices other than $o$.
\end{proof}

\begin{proposition}\label{characterizing-eigenfunctions}
  Assume $z^2\not\in\{0,1\}$. Let $f:\mathfrak X \rightarrow \C$ and $\vec \mu : {\mathfrak E}_o \rightarrow \C$ satisfy Condition~(\ref{beta}), i.e., $\vec\mu=\beta_z(f)$. Then $f\in{\mathcal E}_{\chi(z)}(\mathfrak X)$ if and only if $\vec\mu\in L({\mathfrak E}_o)$ and $f(o)=\sum_{o=\iota(\vec e)}{\vec\mu}(\vec e)$.
\end{proposition}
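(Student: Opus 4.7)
The plan is to reduce the claim to a local computation at the base vertex $o$ and then use Condition~\pref{beta} to translate the eigenvalue equation at $o$ into the stated normalization.

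By definition, $f\in\mathcal E_{\chi(z)}(\mathfrak X)$ means $(\Delta-\chi(z))f=0$ at every vertex. Lemma~\ref{previous} already identifies the validity of this equation at every vertex $v\neq o$ with the membership $\vec\mu\in L(\mathfrak E_o)$. So the only thing left to show is that, given $\vec\mu=\beta_z(f)$ and $\vec\mu\in L(\mathfrak E_o)$, the remaining equation $(\Delta-\chi(z))f(o)=0$ is equivalent to
\[
f(o)\ =\ \sum_{o=\iota(\vec e)}\vec\mu(\vec e).
\]

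To obtain this equivalence I would unpack the definitions. The eigenvalue equation at $o$ reads
\[
\sum_{\iota(\vec e)=o}f(\tau(\vec e))\ =\ (z+q_oz^{-1})f(o).
\]
Every edge $\vec e$ with $\iota(\vec e)=o$ lies in $\mathfrak E_o$ and satisfies $d(o,\iota(\vec e))=0$, so Condition~\pref{beta} gives
\[
f(\tau(\vec e))\ =\ z^{-1}f(o)+z^{-1}(z^2-1)\vec\mu(\vec e).
\]
Summing over the $q_o+1$ edges emanating from $o$ and comparing with the eigenvalue equation above yields, after multiplying by $z$ and collecting terms,
\[
(z^2-1)\sum_{o=\iota(\vec e)}\vec\mu(\vec e)\ =\ (z^2-1)f(o).
\]
Since $z^2\neq 1$, this is equivalent to $f(o)=\sum_{o=\iota(\vec e)}\vec\mu(\vec e)$, which is precisely the normalization condition in the proposition. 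Combining this with Lemma~\ref{previous} yields both implications of the proposition.

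No serious obstacle is expected: the argument is just a one-vertex computation that parallels the calculation in Lemma~\ref{previous}, with the hypothesis $z^2\neq 1$ used exactly once, to divide by $z^2-1$ at the end.
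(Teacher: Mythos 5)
Your proposal is correct and follows essentially the same route as the paper: invoke Lemma~\ref{previous} to handle all vertices $v\neq o$, then perform the one-vertex computation at $o$ using Condition~\pref{beta} with $d(o,\iota(\vec e))=0$ and divide by $z^2-1$ at the end. The algebra you sketch is exactly the chain of equivalences in the paper's proof, just rearranged.
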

\begin{proof}
  In view of Lemma~\ref{previous}, we only have to see that the local condition $f(o)=\sum_{o=\iota(\vec e)}{\vec\mu}(\vec e)$ is equivalent to $f$ being an eigenfunction of the Laplacian ``at $o$'' for the eigenvalue $\chi(z)$. This, again, is purely computational:
  \begin{align*}
    & \sum_{o=\iota(\vec e)}f(\tau(\vec e)) = (z+q_oz^{-1})f(o)
    \\ \Longleftrightarrow \quad & \sum_{o=\iota(\vec e)} ( f(\tau(\vec e)) - z^{-1} f(o) ) = zf(o) - z^{-1}f(o)
    \\ \Longleftrightarrow \quad & \sum_{o=\iota(\vec e)} ( zf(\tau(\vec e)) - f(o) ) = (z^2 - 1)f(o)
    \\ \Longleftrightarrow \quad & (z^2-1)\sum_{o=\iota(\vec e)} {\vec\mu}(\vec e) = (z^2 -1 )f(o)                    
  \end{align*}
  Division by the non-vanishing number $z^2-1$ finishes the proof.
\end{proof}

\begin{theorem}\label{thm:Poisson}
Assume $z^2\not\in\{0,1\}$. The linear map $\beta_z$ from Observation~\ref{global-boundary-value} restricts to a linear isomorphism ${\vec\beta}_z :  \mathcal E_{\chi(z)}(\mathfrak X) \rightarrow L({\mathfrak E}_o)$ making the following diagram commutative:
  \[
    \begin{tikzcd}
      \mathcal E_{\chi(z)}(\mathfrak X)
      \arrow{r}{\vec\beta_z}
      & L({\mathfrak E}_o) = L(\vec{\mathfrak E})  \\
      & \mathcal M_\mathrm{fa}(\Omega)
      \arrow[swap]{u}{\text{Thm.~\ref{thm:fa-measures-Omega}}}
      \arrow{ul}{P_z} 
    \end{tikzcd}
  \]
\end{theorem}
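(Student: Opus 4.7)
The plan is to assemble the theorem from the preparatory Lemma~\ref{composition} and Proposition~\ref{characterizing-eigenfunctions}, which already do most of the work; what remains is to organize their consequences into three assertions: that $\vec\beta_z$ is well defined into $L({\mathfrak E}_o)$, that it is a linear bijection, and that the triangle commutes.

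For well-definedness, I note that linearity is inherited from $\beta_z$ and that the ``only if'' direction of Proposition~\ref{characterizing-eigenfunctions} gives $\beta_z(f)\in L({\mathfrak E}_o)$ whenever $f\in\mathcal E_{\chi(z)}(\mathfrak X)$. For injectivity, I take $f\in\mathcal E_{\chi(z)}(\mathfrak X)$ with $\vec\beta_z(f)=0$: the local condition at $o$ in Proposition~\ref{characterizing-eigenfunctions} forces $f(o)=\sum_{o=\iota(\vec e)}\vec\beta_z(f)(\vec e)=0$, and condition~(\ref{beta}) with $\vec\mu\equiv 0$ then reads $f(\tau(\vec e))=z^{-1}f(\iota(\vec e))$ for every $\vec e\in{\mathfrak E}_o$; propagating outward from $o$ along the tree yields $f\equiv 0$.

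For surjectivity, given $\vec\mu\in L({\mathfrak E}_o)$ I will build $f$ by recursion. First set $f(o):=\sum_{o=\iota(\vec e)}\vec\mu(\vec e)$, and then, for every $\vec e\in{\mathfrak E}_o$ from $x=\iota(\vec e)$ to $y=\tau(\vec e)$, use~(\ref{beta}) as a definition of $f(y)$ in terms of $f(x)$, which is possible since $z\neq 0$. Because each vertex other than $o$ is the terminal point of exactly one edge in ${\mathfrak E}_o$ (the unique-path property of trees), this recursion defines $f$ on all of $\mathfrak X$ without ambiguity, and $\beta_z(f)=\vec\mu$ holds by construction. The ``if'' direction of Proposition~\ref{characterizing-eigenfunctions} then certifies $f\in\mathcal E_{\chi(z)}(\mathfrak X)$, so $\vec\beta_z$ is surjective.

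Finally, commutativity of the diagram is exactly the content of Lemma~\ref{composition}: the vertical isomorphism from Theorem~\ref{thm:fa-measures-Omega}, composed with the identification $L(\vec{\mathfrak E})=L({\mathfrak E}_o)$ noted in the remark preceding Observation~\ref{global-boundary-value}, sends $\mu\in\mathcal M_{\mathrm{fa}}(\Omega)$ to $\vec\mu$, and the lemma says precisely that $\vec\beta_z(P_z(\mu))=\vec\mu$. I do not anticipate a substantial obstacle: the heavy lifting is already packaged in the preceding results. The most delicate point is making sure the recursive construction in the surjectivity step is internally consistent, but this is forced by the absence of cycles in the tree together with the fact that ${\mathfrak E}_o$ orients each edge of $\mathfrak T$ away from the base vertex.
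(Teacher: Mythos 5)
Your proposal is correct, and it follows the paper's own proof almost step for step: well-definedness of the range from Proposition~\ref{characterizing-eigenfunctions}, commutativity from Lemma~\ref{composition}, and the identical injectivity argument (kernel of $\beta_z$ means $zf(\tau(\vec e))=f(\iota(\vec e))$ along ${\mathfrak E}_o$, the local condition at $o$ forces $f(o)=0$, and propagation kills $f$). The one place you genuinely diverge is surjectivity. The paper gets it for free: since the diagram commutes and the vertical map $\mu\mapsto\vec\mu$ of Theorem~\ref{thm:fa-measures-Omega} is already known to be surjective onto $L(\vec{\mathfrak E})$, the composite $\vec\beta_z\circ P_z$ is surjective and hence so is $\vec\beta_z$. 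You instead build a preimage directly, seeding $f(o):=\sum_{o=\iota(\vec e)}\vec\mu(\vec e)$ and solving~(\ref{beta}) outward edge by edge, then invoking the ``if'' direction of Proposition~\ref{characterizing-eigenfunctions}. Your recursion is sound (each vertex $\neq o$ is the terminal point of exactly one edge of ${\mathfrak E}_o$, and $z\neq 0$ makes the solve possible), and it has the modest advantage of proving surjectivity of $\vec\beta_z$ without routing through Theorem~\ref{thm:fa-measures-Omega} or the Poisson transform at all; the paper's version is shorter but leans on the measure-theoretic isomorphism. Either way the statement follows.
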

\begin{proof}
  That the restriction ${\vec\beta}_z : \mathcal E_{\chi(z)}(\mathfrak X) \rightarrow L({\mathfrak E}_o)$ has the desired range follows from Proposition~\ref{characterizing-eigenfunctions}. Commutativity of the diagram has been established in Lemma~\ref{composition}. It follows automatically that $\vec\beta_z$ is surjective.

  To see that ${\vec\beta}_z$ has trivial kernel, observe that a function $f:{\mathfrak X}\rightarrow\C$ lies in the kernel of $\beta_z$ if and only if
  \[
    zf(\tau(\vec e)) = f(\iota(\vec e))\qquad\text{for all }{\vec e} \in {\mathfrak E}_o.
  \]
  For $f\in{\mathcal E}_{\chi(z)}(\mathfrak X)$, we additionally find $f(o)=0$ by Proposition~\ref{characterizing-eigenfunctions}. It follows that $f$ vanishes everywhere by propagation along edges in ${\mathfrak E}_o$.
\end{proof}

Motivated by special cases many authors call the inverse of a Poisson transform a \emph{boundary value map}. This can be justified in special cases. We follow this tradition, our justification being Corollary~\ref{thm:bdy-value} below.

To simplify notation, we put
\(
  \Omega(x) := \Omega_o(x) =
  \{ \omega \in \Omega
  \mid
  x \in [o,\omega[ \}
\) for each vertex $x$. Note that $\Omega(o)=\Omega)$. For a
finitely additive measure $\mu$, we use $\mu(x)$ as shorthand for
$\mu(\Omega(x))$.

\begin{lemma}\label{subtrees}
  Fix a finitely additive measure $\mu$, a complex number $z\not\in\{-1,0,1\}$,
  and the Poisson transform $f=P_z(\mu)$.
  Let $x_0,x_1,x_2,\ldots$ be a chain towards $\omega$ and assume
  $\omega\in\Omega(x_0)$, i.e., the chain points away from the
  basepoint $o$. Let $m=d(o,x_0)$. Then, $d(o,x_k)=m+k$ and we have
  \begin{equation}\label{along-chain}
    \frac{
      f(x_k)
    }{
      z^{m+k}
    }
    =
    \frac{
      f(x_0)
    }{
      z^{m+2k}
    }
    +
    \frac{z^2-1
    }{
      z^2
    }
    \sum_{j=1}^{k}z^{2(j-k)} \mu(x_j)
  \end{equation}
\end{lemma}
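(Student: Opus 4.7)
The plan is to prove \pref{along-chain} by induction on $k$, turning Condition~\pref{beta} into a one-step recurrence along the chain. Two preparatory observations set the stage. First, because the chain $x_0,x_1,\ldots$ points away from $o$, the directed edge $\vec e_j:=(x_{j-1},x_j)$ lies in ${\mathfrak E}_o$, its initial vertex $x_{j-1}$ is at distance $m+j-1$ from $o$ (which already yields the claim $d(o,x_k)=m+k$), and $\partial_+\vec e_j=\Omega(x_j)$, so that $\vec\mu(\vec e_j)=\mu(x_j)$. Second, by Lemma~\ref{composition} applied to $f=P_z(\mu)$, we have $\beta_z(f)=\vec\mu$, so evaluating \pref{beta} at $\vec e_j$ yields the key edge equation
\[
  zf(x_j)-f(x_{j-1})=(z^2-1)\,z^{m+j-1}\,\mu(x_j).
\]

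\textbf{Induction.} For $k=0$, both sides of \pref{along-chain} equal $f(x_0)/z^m$. For the inductive step, divide the edge equation at $j=k+1$ by $z^{m+k+2}$; this gives
\[
  \frac{f(x_{k+1})}{z^{m+k+1}}=\frac{1}{z^2}\cdot\frac{f(x_k)}{z^{m+k}}+\frac{z^2-1}{z^2}\,\mu(x_{k+1}).
\]
Substituting the inductive hypothesis for $f(x_k)/z^{m+k}$, the transport term becomes $f(x_0)/z^{m+2(k+1)}$ and the exponents inside the sum shift by $-2$, so the summand index $j$ now carries coefficient $z^{2(j-(k+1))}$. The freshly added term $\tfrac{z^2-1}{z^2}\mu(x_{k+1})$ fills in precisely the missing $j=k+1$ contribution, whose coefficient $z^{2((k+1)-(k+1))}=1$ matches. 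Collecting everything yields \pref{along-chain} at level $k+1$.

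\textbf{Main obstacle.} There is no serious obstacle; once the edge equation and the identification $\vec\mu(\vec e_j)=\mu(x_j)$ are in place, the computation is purely algebraic, and the hypothesis $z^2\notin\{0,1\}$ is used only via Lemma~\ref{composition} (the excluded value $z=-1$ from the statement of the lemma plays no extra role here). The only potential pitfall is keeping track of the three different exponents of $z$ in the target formula ($m+k$ on the left, $m+2k$ in the transport term, and $2(j-k)$ inside the sum). This bookkeeping becomes transparent if one abbreviates $g_j:=f(x_j)/z^{m+j}$, so that the edge equation turns into the first-order linear recurrence
\[
  g_j=z^{-2}g_{j-1}+\frac{z^2-1}{z^2}\,\mu(x_j),
\]
with constant ratio $z^{-2}$ and driving term $\tfrac{z^2-1}{z^2}\mu(x_j)$; then \pref{along-chain} is simply its closed-form solution with initial value $g_0=f(x_0)/z^m$.
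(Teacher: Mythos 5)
Your proof is correct and follows essentially the same route as the paper: both reduce Condition~\pref{beta} along the chain to the one-step recurrence $\frac{f(x_j)}{z^{m+j}}=\frac{1}{z^2}\bigl((z^2-1)\mu(x_j)+\frac{f(x_{j-1})}{z^{m+j-1}}\bigr)$ and then solve it, the paper by explicit back-substitution and you by the equivalent forward induction. Your extra care in identifying $\vec\mu(\vec e_j)=\mu(x_j)$ and invoking Lemma~\ref{composition} to justify the edge equation only makes explicit what the paper leaves implicit.
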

\begin{proof}
  First, we obtain
  \[
    \frac{f(x_{j})}{z^{m+j}}
    =
    \frac{1}{z^2}\left(
      (z^2-1)\mu(x_j)
      +
      \frac{f(x_{j-1})}{z^{m+j-1}}
    \right)
  \]
  for each $j=1,2,\ldots,k$ by rearranging~(\ref{beta}). Now, the computation becomes
  a matter of back-substitution:
  \begin{align*}
    \frac{
      f(x_k)
    }{
      z^{m+k}
    }
    & =
    \frac{1}{z^2}\left(
      (z^2-1)\mu(x_k)
      +
      \frac{f(x_{k-1})}{z^{m+k-1}}
    \right)
      \\
    & =
    \frac{1}{z^2}\left(
      (z^2-1)\mu(x_k)
      +
      \frac{1}{z^2}\left(
      (z^2-1)\mu(x_{k-1})
      +
      \frac{f(x_{k-2})}{z^{m+k-2}}
      \right)
    \right)
    \\
    & =
    \frac{1}{z^2}\left(
      (z^2-1)\mu(x_k)
      +
      \frac{1}{z^2}\left(
      (z^2-1)\mu(x_{k-1})
      +\cdots+
      \frac{1}{z^2}\left(
      (z^2-1)\mu(x_1)
      +
      \frac{f(x_0)}{z^m}
      \right)\cdots
      \right)
    \right)
    \\
    & =
      \frac{z^2-1}{z^2}\mu(x_k)
      +
      \frac{z^2-1}{z^4}\mu(x_{k-1})
      +\cdots+
      \frac{z^2-1}{z^{2k-2}}\mu(x_1) + \frac{f(x_0)}{z^{m+2k}}
  \end{align*}
\end{proof}

\begin{remark}{\rm
    Assume that $|z| > 1$ and let $\delta_\omega$ be the Dirac measure for an end $\omega\in\Omega$.
    Then $\delta_\omega(x)=1$ for those vertices satisfying $x\in[0,\omega[$
    and $\delta_\omega(x)=0$ otherwise.
    Consider a chain $o=x_0,x_1,x_2,\ldots$. By~(\ref{along-chain}), we get
    \[
      \frac{f_{z,\omega}(x_k)}{z^k}
      =
      \frac{f(o)}{z^{2k}}
      +
      \frac{z^2-1}{z^2}
      \sum_{j=1}^{k} z^{2(j-k)} \delta_\omega(x_j)
      .
    \]
    If the chain  $o=x_0,x_1,x_2,\ldots$ defines $\omega$, all
    $\delta_\omega(x_j)=1$, and we find that
    $\frac{f_{z,\omega}(x_k)}{z^k}$ tends to $1$, the partial sum representing
    crowing pieces of the geometric series $1+z^{-2}+z^{-4}+z^{-6}+\cdots$.

    If the chain $o=x_0,x_1,x_2,\ldots$ does not define the end $\omega$,
    all but finitely many $\delta_\omega(x_j)$ vanish, and the non-vanishing
    entries get lower weights as $k$ increases. Hence in this case,
    $\frac{f_{z,\omega}(x_k)}{z^k}$ tends to $0$.

    Thus, we can recover the Dirac measure by passing to limits:
    \[
      \lim_{k \rightarrow \infty}
      \frac{f_{z,\omega}(x_k)}{z^k}
      =
      \begin{cases}
        1 & \text{if\ } x_0,x_1,x_2,\ldots \text{\ defines\ }\omega \\
        0 & \text{otherwise}
      \end{cases}
    \]
    This extends by linearity to all measures of finite support.
}
\end{remark}

\begin{theorem}\label{thm:regular}
  Let $\mu$, $z$, and $f$ be as in Lemma~\ref{subtrees} and assume
  that the tree $\mathfrak T$ is regular of degree $q+1$ with   $q<z^2$.
  Then for each vertex $x$ of distance $m=d(o,x)\geq 1$ to $o$, we have
  \[
    \mu(x)
    =
    \frac{z^2-q}{z^2-1}
    \lim_{k\rightarrow \infty}
    \frac{1}{z^{m+k}} \sum_{y\in S_k(x)}
    f(y)
  \]
  where $S_k(x)=\{ y \in \mathfrak X \mid d(x,y)=k, x\in[o,y] \}$ is
  the set of vertices at distance $k$ from $x$ away from $o$.
\end{theorem}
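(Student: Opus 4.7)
The plan is to apply Lemma~\ref{subtrees} along each of the $q^k$ geodesic chains of length $k$ starting at $x$ and ending at a vertex $y\in S_k(x)$, and then to sum the resulting identities over $y$. For such a chain $x=x_0,x_1,\ldots,x_k=y$ with $d(o,x_0)=m$ pointing away from $o$, Lemma~\ref{subtrees} yields
$$\frac{f(y)}{z^{m+k}}=\frac{f(x)}{z^{m+2k}}+\frac{z^2-1}{z^2}\sum_{j=1}^{k}z^{2(j-k)}\mu(x_j^{(y)}),$$
where $x_j^{(y)}$ denotes the $j$-th vertex on the unique chain from $x$ to $y$.

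Next I sum both sides over $y\in S_k(x)$. Since the tree is regular of degree $q+1$, one has $|S_k(x)|=q^k$, so the first term contributes $\frac{f(x)}{z^m}(q/z^2)^k$, which vanishes as $k\to\infty$ by the hypothesis $q<z^2$. For the double sum, the key combinatorial observation is that for each fixed $v\in S_j(x)$, exactly $q^{k-j}$ vertices of $S_k(x)$ have $v$ as their $j$-th step (namely the vertices of $S_{k-j}(v)$). Interchanging summations and invoking finite additivity of $\mu$ with respect to the disjoint decomposition $\Omega(x)=\bigsqcup_{v\in S_j(x)}\Omega(v)$ (a consequence of Remark~\ref{vecE-observation} applied iteratively) gives
$$\sum_{y\in S_k(x)}\mu(x_j^{(y)})=q^{k-j}\sum_{v\in S_j(x)}\mu(v)=q^{k-j}\mu(x).$$

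Plugging this in, the second term collapses to $\frac{(z^2-1)\mu(x)}{z^2}\sum_{j=1}^{k}(q/z^2)^{k-j}$, a geometric series converging (again thanks to $q<z^2$) to $\frac{(z^2-1)\mu(x)}{z^2}\cdot\frac{z^2}{z^2-q}=\frac{(z^2-1)\mu(x)}{z^2-q}$ as $k\to\infty$. Solving for $\mu(x)$ then yields the claimed formula.

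The only real obstacle is the combinatorial bookkeeping of the multiplicities $q^{k-j}$ together with the correct application of finite additivity to collapse the inner sum to $\mu(x)$; once this is set up, everything else reduces to a routine convergence argument for a geometric series.
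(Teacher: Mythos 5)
Your proposal is correct and follows essentially the same route as the paper's proof: apply Lemma~\ref{subtrees} along the unique chain from $x$ to each $y\in S_k(x)$, sum over $S_k(x)$, use the multiplicity count $q^{k-j}$ together with $\mu(x)=\sum_{v\in S_j(x)}\mu(v)$ to collapse the double sum, and finish with the geometric series and the vanishing of $(q/z^2)^k$. The only cosmetic difference is that you make explicit the finite-additivity justification for $\sum_{v\in S_j(x)}\mu(v)=\mu(x)$, which the paper merely asserts.
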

\begin{proof}
  Since $\mathfrak T$ is regular of degree $q+1$, we find that
  $S_k(x)$ has exactly $q^k$ elements. More precisely, the vertices
  in $S_0(x)\cup S_1(x)\cup\cdots\cup S_k(x)$ form a $q$-ary tree
  rooted at $x$
  with $S_j(x)$ as the vertex set at depth~$j$.
  In particular, for each vertex $y\in S_k(X)$ there is a unique
  chain $x=x_0(y),x_1(y),\ldots,x_k(y)=y$ with $x_j(y)\in S_j(x)$.
  Using~(\ref{along-chain}), we can then write
  \[
    \frac{f(y)}{z^{m+k}}
    =
    \frac{f(x)}{z^{m+2k}} + \frac{z^2-1}{z^2} \sum_{j=1}^{k} z^{2(j-k)}\mu(x_j(y))
  \]
  Summation over $S_k(x)$ yields:
  \begin{align*}
    \frac{1}{z^{m+k}}
    \sum_{y \in S_k(x)} f(y)
    & =
      \frac{f(x)}{z^m}\frac{q^k}{z^{2k}}
      +
      \frac{z^2-1}{z^2}
      \sum_{j=1}^{k}
      \sum_{y_j\in S_j(x)}
      \frac{q^{k-j}}{z^{2(k-j)}} \mu(y_j)
      \\
     & =
      \frac{f(x)}{z^m}\frac{q^k}{z^{2k}}
      +
      \frac{z^2-1}{z^2}
       \sum_{j=1}^{k}
       \frac{q^{k-j}}{z^{2(k-j)}}
       \sum_{y_j\in S_j(x)}
       \mu(y_j)
       \\
     & = 
      \frac{f(x)}{z^m}\frac{q^k}{z^{2k}}
      +
      \frac{z^2-1}{z^2}\mu(x)
       \sum_{j=1}^{k}
       \frac{q^{k-j}}{z^{2(k-j)}}
  \end{align*}
  The reason for the powers of $q$ is that there are $q^{k-j}$ chains through
  from $x$ to level $k$ through each vertex at level $j$.
  Also, note that
  $\mu(x) = \sum_{y\in S_j(x)}\mu(y)$ for any $j$.

  Now, the behavior as $k$ tends to infinity is clear as
  \(
    \sum_{j=1}^{k}
    \frac{q^{k-j}}{z^{2(k-j)}}
  \)
  limits to $(1-\frac{q}{z^2})^{-1}= \frac{z^2}{z^2-q}$
  whereas $\frac{q^k}{z^{2k}}$ tends to $0$. We obtain
  \[
    \lim_{k\rightarrow \infty}
    \frac{1}{z^{m+k}}
    \sum_{y \in S_k(x)} f(y)
    =
    \frac{z^2-1}{z^2-q} \mu(x)
  \]
  and the claim follows.
\end{proof}

\begin{definition}{\rm
For any clopen set $U\in\Sigma$ and $n\in\N_0$, we put
\(
  \mathfrak X_n(U)=\{x\in[o,U[{}\mid d(o,x)=n\}
  ,
\)
where
\(
  [o,U[{}=\bigcup_{\omega\in U}[o,\omega[
  .
\)
}
\end{definition}
Note that any clopen set $U$ is a finite union of sets $\Omega(x)$. In fact,
$U$ decomposes as the disjoint union
\[
  U = \bigcup_{x \in \mathfrak X_n(U)} \Omega(x)
\]
for any sufficiently large $n$.

Whereas the relationship between a finitely additive measure $\mu$ and
its Poisson transform $f=P_z(\mu)$ is algebraic, we can now see how to
recover $\mu$ from $f$ analytically by means of a limiting procedure in
the case of a regular tree.
\begin{corollary}\label{thm:bdy-value}
  Under the assumptions of Theorem~\ref{thm:regular}, for any clopen set
  $U\in\Sigma$ we have
  \[
    \mu(U) = \frac{z^2-1}{z^2-q} \lim_{n \rightarrow \infty}
    \frac{1}{z^n}
    \sum_{x \in \mathfrak X_n(U)} f(x)
    .
  \]
\end{corollary}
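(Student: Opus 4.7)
The plan is to deduce the corollary from Theorem~\ref{thm:regular} applied vertex-by-vertex at some sufficiently deep level beneath $U$, combined with the basic geometric observation that the spheres $S_k(x)$ rooted at the vertices $x\in\mathfrak X_{n_0}(U)$ partition $\mathfrak X_{n_0+k}(U)$.

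First I fix an integer $n_0\geq 1$ large enough that $U$ admits the disjoint decomposition $U=\bigcup_{x\in\mathfrak X_{n_0}(U)}\Omega(x)$, which is available by the observation recorded just before the statement of the corollary. Since the tree has bounded degree, the set $\mathfrak X_{n_0}(U)$ is finite, so finite additivity of $\mu$ gives
\[
  \mu(U)=\sum_{x\in\mathfrak X_{n_0}(U)}\mu(x).
\]
Each $x$ appearing in this sum satisfies $d(o,x)=n_0\geq 1$, so Theorem~\ref{thm:regular} applies to each summand individually. Because the outer sum is finite, it may be pulled inside the limit in $k$ supplied by that theorem, yielding
\[
  \mu(U)=\frac{z^2-q}{z^2-1}\,\lim_{k\to\infty}\frac{1}{z^{n_0+k}}\sum_{x\in\mathfrak X_{n_0}(U)}\sum_{y\in S_k(x)}f(y).
\]

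The only remaining step is the disjoint-union identity
\[
  \bigcup_{x\in\mathfrak X_{n_0}(U)}S_k(x)=\mathfrak X_{n_0+k}(U),
\]
which holds because a vertex $y$ with $d(o,y)=n_0+k$ lies in $[o,U[{}$ if and only if the unique vertex at depth $n_0$ on the geodesic $[o,y]$ belongs to $\mathfrak X_{n_0}(U)$, in which case that vertex is the unique $x\in\mathfrak X_{n_0}(U)$ with $y\in S_k(x)$. Substituting and reindexing by $n=n_0+k\to\infty$ gives the desired formula (with the prefactor inherited from Theorem~\ref{thm:regular}); the tail of the sequence $\frac{1}{z^n}\sum_{y\in\mathfrak X_n(U)}f(y)$ from $n_0$ onward determines its full limit.

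There is no serious obstacle: the interchange of sum and limit is automatic by finiteness of $\mathfrak X_{n_0}(U)$, and everything else reduces to the disjoint-union identity above, which is the single piece of geometric content.
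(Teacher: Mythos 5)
Your proof is correct and follows essentially the same route as the paper's: the paper likewise reduces to the sets $\Omega(x)$ via the finite disjoint decomposition of $U$ and then invokes Theorem~\ref{thm:regular}, while your write-up merely makes explicit the finite-additivity step and the identity $\bigcup_{x\in\mathfrak X_{n_0}(U)}S_k(x)=\mathfrak X_{n_0+k}(U)$ that the paper leaves implicit. Note only that your computation (correctly) produces the prefactor $\frac{z^2-q}{z^2-1}$ inherited from Theorem~\ref{thm:regular}, which suggests that the fraction in the corollary as printed is inverted.
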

\begin{proof}
  It suffices to show the claim for the sets $\Omega(x)$ with $x\neq o$.
  In that case, however,
  the statement is just a restatement of Theorem~\ref{thm:regular}.
\end{proof}

\section{H\"older continuous functions}\label{sec:Hoelder}
In this section we give a characterization for the regularity of boundary values. 

Let $S^+\mathfrak X$ be the space of chains of the form $x=[x_0,\omega[{}  = (x_0,x_1,\ldots)$. For $0<\vartheta<1$ we define the metric $d_\vartheta(x,y):=\sum_{x_i\not=y_i}\vartheta^i$ on $S^+\mathfrak X$.

If we fix a base point $o\in\mathfrak X$ and $0<\vartheta <1$ we can also define a metric $d_{o,\vartheta}$ on the boundary $\Omega$ via
$$d_{o,\vartheta}(\omega_1,\omega_2):=\vartheta^{d_{\mathrm{max}}},$$
where $d_{\mathrm{max}}:= \sup\{d(o,v)\mid v\in [o,\omega_1[{}\cap [o,\omega_2[{}\}$. 

\begin{lemma}\label{lem:metric Omega}
The equivalence class of the metric $d_{o,\vartheta}$ on $\Omega$ does not depend on the choice of the base point $o\in\mathfrak X$. 
\end{lemma}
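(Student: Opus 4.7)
The plan is to prove the stronger statement that for any two base points $o, o' \in \mathfrak{X}$ with $k := d(o,o')$, the metrics $d_{o,\vartheta}$ and $d_{o',\vartheta}$ are bi-Lipschitz equivalent with explicit constants depending only on $k$ and $\vartheta$, namely
$$\vartheta^{k}\, d_{o',\vartheta}(\omega_1,\omega_2) \;\leq\; d_{o,\vartheta}(\omega_1,\omega_2) \;\leq\; \vartheta^{-k}\, d_{o',\vartheta}(\omega_1,\omega_2) \qquad \text{for all } \omega_1,\omega_2 \in \Omega.$$
This clearly implies the two metrics lie in the same equivalence class (in particular, they generate the same topology on $\Omega$).

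First I would reinterpret $d_{\max}$ geometrically. For $\omega_1 \neq \omega_2$ the intersection $[o,\omega_1[ \cap [o,\omega_2[$ is a finite sub-chain starting at $o$ and ending at a unique vertex $v_o = v_o(\omega_1,\omega_2)$, the \emph{median} of $\{o,\omega_1,\omega_2\}$; equivalently, $v_o$ is the vertex at which the two chains to $\omega_1$ and $\omega_2$ branch apart. Since every $v$ in the intersection lies on the sub-chain $[o,v_o]$, the supremum defining $d_{\max}$ is attained at $v_o$, so
$$d_{o,\vartheta}(\omega_1,\omega_2) = \vartheta^{d(o,v_o)}.$$
(When $\omega_1 = \omega_2$ the intersection is infinite, giving $d_{o,\vartheta}(\omega,\omega) = 0$, consistently with every choice of base point.)

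The key estimate is $|d(o,v_o) - d(o',v_{o'})| \leq k$. Let $\gamma$ denote the unique bi-infinite chain joining $\omega_1$ to $\omega_2$; in a tree this is convex, so the nearest-point projection $\pi : \mathfrak{X} \to \gamma$ is well-defined and $1$-Lipschitz. I would verify that $v_o = \pi(o)$: the vertex $v_o$ lies on $\gamma$ by construction, and any other vertex of $\gamma$ is strictly further from $o$ than $v_o$ because any chain from $o$ to such a vertex must first pass through $v_o$. Thus $d(v_o,v_{o'}) \leq d(o,o') = k$, and the extremality of $v_o$ gives $d(o,v_o) \leq d(o,v_{o'}) \leq k + d(o',v_{o'})$; swapping the roles of $o$ and $o'$ yields the reverse inequality, proving the estimate.

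Substituting $d(o,v_o) - d(o',v_{o'}) \in [-k,k]$ into the formulas for $d_{o,\vartheta}$ and $d_{o',\vartheta}$ and using $0 < \vartheta < 1$ yields the claimed bi-Lipschitz inequality. I do not foresee any real obstacle; the only ingredient that requires a moment of care is the identification $v_o = \pi(o)$, but this is purely a consequence of how chains in a tree branch and is immediate once the median is set up correctly.
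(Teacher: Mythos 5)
Your proof is correct and reaches exactly the paper's bi-Lipschitz estimate $\vartheta^{d(o,o')}d_{o',\vartheta}\le d_{o,\vartheta}\le \vartheta^{-d(o,o')}d_{o',\vartheta}$ via the same key inequality $|d(o,v_o)-d(o',v_{o'})|\le d(o,o')$; the only difference is that you justify this inequality by identifying $v_o$ as the nearest-point projection of $o$ onto the bi-infinite chain $]\omega_1,\omega_2[$ and invoking extremality plus the triangle inequality, whereas the paper runs a short two-case check on whether $v_o,v_{o'}$ lie on $[o,o']$. Your case-free argument is, if anything, a cleaner way to verify the same step.
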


\begin{proof}
Let $o'\in\mathfrak X$ be another base point. For $\omega_1,\omega_2\in \Omega$ let $v,v'$ the two vertices realizing the maximal distance to $o$ in $[o,\omega_1[{}\cap [o,\omega_2[$, respectively $o'$ in $[o',\omega_1[{}\cap [o',\omega_2[$. Then there are only two possibilties: Either $v,v\in[o,o']$ or else $v,v'\not\in[o,o']$. In both cases we can check that 
$d(o,v)\le d(o',v')+d(o,o')$ and $d(o',v')\le d(o,v)+d(o,o')$ and obtain 
$$\vartheta^{d(o,o')} d_{o',\vartheta}(\omega_1,\omega_2)\le d_{o,\vartheta}(\omega_1,\omega_2)\le \vartheta^{-d(o,o')} d_{o',\vartheta}(\omega_1,\omega_2).$$
\end{proof}

\begin{definition}{\rm
On the space 
$$\mathcal F_\vartheta:=\{ f:S^+\mathfrak X\to\C\mid \exists C_f>0\ \forall x,y\in S^+\mathfrak X:\ |f(x)-f(y)|\le C_f d_\vartheta (x,y)\}$$
of Lipschitz continuous functions w.r.t. the metric $d_\vartheta$ we set 
$$|f|_\vartheta:= \inf \{C_f\mid \text{Lipschitz constants for } f\}$$
and $\|f\|_\vartheta:=|f|_\vartheta+\|f\|_\infty$. 
}
\end{definition} 

\begin{remark}\label{rem:Lipschitz-scale}{\rm
$(\mathcal F_\vartheta,\|\cdot\|_\vartheta)$ is a Banach space (see \cite[Exer.~1.16]{Ba00}) for each $\vartheta\in{}]0,1[{}$. If $0<\vartheta'<\vartheta<1$, then $C_c^\mathrm{lc}(S^+\mathfrak X)\subseteq \mathcal F_{\vartheta'}\subseteq \mathcal F_\vartheta$.  The spaces $\mathcal F_\vartheta$ can thus be seen as spaces with increasing regularity (for $\vartheta\to 0$). The locally constant functions are contained in all of them.
}
\end{remark}
\begin{remark}\label{rem:Hoelder}{\rm
Instead of working with Lipschitz functions for the scale $d_\vartheta$ of metrics
$0<\vartheta<1$ one can also fix $0<\vartheta_0<1$. Then, for $\vartheta_0\leq \vartheta<1$ we can write $\vartheta=\vartheta_0^\alpha$ for some $0<\alpha<1$ and obtain $d_\vartheta(x,y)= d_{\vartheta_0}(x,y)^\alpha$. The spaces $\mathcal F_\vartheta$ then corresponds to the space of $\alpha$-H\"older continuous functions w.r.t. the metric $d_{\vartheta_0}$. We therefore call the spaces $\mathcal F_\vartheta$ Hölder spaces.}
\end{remark}

\begin{definition}{\rm
We denote the topological dual of $\mathcal F_\vartheta$ by $\mathcal F_\vartheta'$. 
}
\end{definition}

\begin{remark}{\rm
\begin{itemize}
\item[(i)]The dual spaces $\mathcal F_\vartheta'$ are again Banach spaces and Remark~\ref{rem:Lipschitz-scale} implies that 
$$\mathcal F_\vartheta'\subseteq \mathcal F_{\vartheta'}'\subseteq \mathcal K'(S^+\mathfrak X)$$
for $0<\vartheta'<\vartheta<1$. Note that these spaces show up in the spectral theory of transfer operators for subshifts of finite type (see e.g. \cite{Ba00}).
\item[(ii)]
The situation bears some similarity to the inclusion of Sobolev spaces:
$$H^k\supset H^{k'}\supset C^\infty\supset \mathcal A\quad\text{and}\quad  H^{-k}\subset H^{-k'}\subset \mathcal D'\subset \mathcal A'$$
for $0<k<k'<\infty$. 
\end{itemize}
}
\end{remark}

Recall from Remark~\ref{rem:K'S+X} that $\mathcal K'(S^+\mathfrak X)\cong C\big(\mathfrak X, \mathcal K'(\Omega)\big)$. Next, we identify the function spaces on $\Omega$ which correspond to $\mathcal F_\vartheta$ and $\mathcal F_\vartheta'$. 
We start with a lemma.

\begin{definition}{\rm
On the space 
$$\mathcal F_{o,\vartheta}(\Omega):=\{ f:\Omega\to\C\mid \exists C_f>0\  \forall \omega_1,\omega_2\in \Omega:\ |f(\omega_1)-f(\omega_2)|\le C_f d_{o,\vartheta} (\omega_1,\omega_2)\}$$
of Lipschitz continuous functions w.r.t. the metric $d_{o,\vartheta}$ we set 
$$|f|_{o,\vartheta}:= \inf \{C_f\mid \text{Lipschitz constants for } f\}$$
and $\|f\|_{o,\vartheta}:=|f|_{o,\vartheta}+\|f\|_\infty$. 
}
\end{definition} 

\begin{lemma}
$\mathcal F_{o,\vartheta}(\Omega)$ is a Banach space w.r.t. the norm $\|f\|_{o,\vartheta}$. Moreover, the equivalence class of the norms $\|f\|_{o,\vartheta}$ does not depend on the choice of the base point $o$. 
\end{lemma}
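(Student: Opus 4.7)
The plan is to handle the two claims separately, starting with independence of the base point (easier) and then completeness.

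For base point independence, I would invoke Lemma~\ref{lem:metric Omega} directly. It gives the two-sided estimate
\[
  \vartheta^{d(o,o')}\, d_{o',\vartheta} \;\le\; d_{o,\vartheta} \;\le\; \vartheta^{-d(o,o')}\, d_{o',\vartheta}
\]
pointwise on $\Omega\times\Omega$. This transports Lipschitz constants w.r.t.\ one metric to Lipschitz constants w.r.t.\ the other, so $\mathcal F_{o,\vartheta}(\Omega)=\mathcal F_{o',\vartheta}(\Omega)$ as sets, and one obtains
\[
  \vartheta^{d(o,o')}|f|_{o,\vartheta} \;\le\; |f|_{o',\vartheta} \;\le\; \vartheta^{-d(o,o')}|f|_{o,\vartheta}.
\]
Since $\|\cdot\|_\infty$ does not depend on the base point, the norms $\|\cdot\|_{o,\vartheta}$ and $\|\cdot\|_{o',\vartheta}$ are equivalent.

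For the Banach space structure, I would first note that $|\cdot|_{o,\vartheta}$ is a seminorm (absolute homogeneity and subadditivity follow from taking infima over Lipschitz constants), so $\|\cdot\|_{o,\vartheta}=|\cdot|_{o,\vartheta}+\|\cdot\|_\infty$ is a norm. Completeness is then the standard Lipschitz-space argument. Let $(f_n)$ be Cauchy in $\mathcal F_{o,\vartheta}(\Omega)$. From $\|f_n-f_m\|_\infty\le\|f_n-f_m\|_{o,\vartheta}$ the sequence is uniformly Cauchy, and since $\Omega$ is compact it converges uniformly to some continuous, hence bounded, function $f$. The Lipschitz seminorms $|f_n|_{o,\vartheta}$ are bounded (Cauchy sequences in semi-normed spaces are bounded), and letting $n\to\infty$ pointwise in the estimate $|f_n(\omega_1)-f_n(\omega_2)|\le M\, d_{o,\vartheta}(\omega_1,\omega_2)$ shows $f\in\mathcal F_{o,\vartheta}(\Omega)$.

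To finish, I would upgrade convergence from uniform to norm convergence. Given $\epsilon>0$, choose $N$ with $|f_n-f_m|_{o,\vartheta}<\epsilon$ for all $n,m\ge N$; then for each pair $\omega_1\ne\omega_2$ one has
\[
  \frac{|(f_n-f_m)(\omega_1)-(f_n-f_m)(\omega_2)|}{d_{o,\vartheta}(\omega_1,\omega_2)} < \epsilon,
\]
and letting $m\to\infty$ (using pointwise convergence $f_m\to f$) yields $|f_n-f|_{o,\vartheta}\le\epsilon$ for all $n\ge N$. Combined with uniform convergence this gives $\|f_n-f\|_{o,\vartheta}\to 0$. The only mild subtlety is justifying that non-strict Lipschitz inequalities survive pointwise passage to the limit, which is routine; no real obstacle arises.
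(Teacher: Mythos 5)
Your proposal is correct and follows essentially the same route as the paper: base-point independence is reduced to the metric equivalence of Lemma~\ref{lem:metric Omega}, and completeness is the standard Lipschitz-space argument (uniform limit exists, survives the Lipschitz estimate, and norm convergence is obtained by letting $m\to\infty$ in the difference-quotient bound for $f_n-f_m$). The only cosmetic difference is that you use boundedness of the Cauchy sequence of seminorms where the paper uses a three-epsilon estimate; both are routine and equivalent.
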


\begin{proof}
Note first that given $f,f'\in \mathcal F_{o,\vartheta}(\Omega)$ with Lipschitz constants $C_f$ and $C_{f'}$, then $\max\{C_f,C_{f'}\}$ is a Lipschitz constant for $f+f'$. Thus the estimate
\begin{eqnarray*}
\|f+f'\|_{o,\vartheta}
&=&\|f+f'\|_\infty+ |f+f'|_{o,\vartheta}\\
&\le& \|f\|_\infty+\|f'\|_\infty +\max\{|f|_{o,\vartheta},|f'|_{o,\vartheta}\}\\
&\le& \|f\|_\infty+\|f'\|_\infty +|f|_{o,\vartheta}+ |f'|_{o,\vartheta}\\
&\le& \|f\|_{o,\vartheta} +\|f'\|_{o,\vartheta}
\end{eqnarray*}
shows that $\|\cdot\|_{o,\vartheta}$ satisfies the triangle inequality. The other norm properties are clearly satisfied, so $\left(\mathcal F_{o,\vartheta}(\Omega),\|\cdot\|_{o,\vartheta}\right)$ is a normed space.

Completeness follows from a standard three epsilon argument: Let $(f_k)_{k\in\mathbb N}$ be a $\|\cdot\|_{o,\vartheta}$-Cauchy sequence, hence a $\|\cdot\|_\infty$-Cauchy sequence and a $|\cdot|_{o,\vartheta}$-Cauchy sequence. Let $f$ be the $\|\cdot\|_\infty$-limit of $(f_k)_{k\in\mathbb N}$. It suffices to show that $f$ is also the $\|\cdot\|_{o,\vartheta}$-limit of $(f_k)_{k\in\mathbb N}$. To this end we note that for $\omega,\omega'\in \Omega$ we have
\begin{eqnarray*}
|f(\omega)-f(\omega')|
&\le &|f(\omega)-f_k(\omega)|+|f_k(\omega)-f_k(\omega')|+|f_k(\omega')-f(\omega')|\\
&\le &2\| f-f_k\|_\infty+|f_k|_{o,\vartheta} d_{o,\vartheta}(\omega,\omega')\\
&\underset{k\to\infty}{\longrightarrow}& \lim_{k\to\infty}|f_k|_{o,\vartheta}\, d_{o,\vartheta}(\omega,\omega'),
\end{eqnarray*}
which implies $|f|_{o,\vartheta}\le \lim_{k\to\infty}|f_k|_{o,\vartheta}$. Thus we see that $f\in \mathcal F_{o,\vartheta}(\Omega)$ and $\|F\|_\vartheta\le \lim_{k\to\infty}\|F_k\|_\vartheta$. As $(f_k)_{k\in\mathbb N}$ is a  $|\cdot|_{o,\vartheta}$-Cauchy sequence, we find for $\epsilon>0$ a $k_0\in\mathbb N$ such that $|f_j-f_k|_{o,\vartheta}\le \epsilon$ for $j,k\ge k_0$. Writing $f-f_k=(f-f_j)+(f_j-f_k)$ for $j, k\ge k_0$ we have
\begin{eqnarray*}
|(f-f_k)(c)-(f-f_k)(c')|
&\le &2\|(f-f_j)\|_\infty+|f_j-f_k|_\vartheta\ d_{o,\vartheta}(\omega,\omega')\\
&\le &2\|(f-f_j)\|_\infty+\epsilon\ d_{o,\vartheta}(\omega,\omega')\\
&\underset{j\to\infty}{\longrightarrow}&\epsilon\ d_{o,\vartheta}(\omega,\omega').
\end{eqnarray*}
 Thus $|f-f_k|_\vartheta\le \epsilon$ and we have shown that $f$ is also the $\|\cdot\|_{o,\vartheta}$-limit of $(f_k)_{k\in\mathbb N}$.  
 
The equivalence of the norms associated with different base points follows from the equivalence of the corresponding metrics on $\Omega$ that was asserted in Lemma~\ref{lem:metric Omega}.
\end{proof}

\begin{remark}{\rm
Consider the subsets $\Omega_o(v)$ of $\Omega$ given as the endpoints of geodesic rays starting in $o$ and passing through $v\in\mathfrak X$. As these sets form a basis for the topology on $\Omega$, for $f\in C^\mathrm{lc}(\Omega)$ there exists an $N\in\N$ such that $f\vert_{\Omega_o(v)}$ is constant for each $v$ with $d(o,v)>N$. In other words, for $d(o,v)>N$ we have $|f(\omega_1)-f(\omega_2)|=0$ for $\omega_1,\omega_2\in\Omega_o(v)\}$. Consequently $C^\mathrm{lc}(\Omega)\subseteq \mathcal F_{o,\vartheta}(\Omega)$ for all $0<\vartheta <1$ and any choice of base point $o\in\mathfrak X$. 
}
\end{remark}

\begin{lemma}
$\mathcal F_\vartheta(S^+\mathfrak X)\cong C\big(\mathfrak X, \mathcal F_{o,\vartheta}(\Omega)\big)$ as topological vector spaces, where the right hand side is equipped with the Banach norm 
$$\|\tilde f\|_{C(\mathfrak X,\mathcal F_\vartheta)}:=\sup_{x\in \mathfrak X}\|\tilde f(x)\|_{x,\vartheta}.$$
In particular the two norms are equivalent. 
\end{lemma}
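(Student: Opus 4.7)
The plan is to exploit the natural set-theoretic bijection $S^+\mathfrak X \longleftrightarrow \mathfrak X \times \Omega$ that sends a chain $[x_0,\omega[{} = (x_0,x_1,x_2,\ldots)$ to the pair $(x_0,\omega)$. Under this identification, any $f \colon S^+\mathfrak X \to \C$ corresponds to a map $\tilde f \colon \mathfrak X \to \C^\Omega$ defined by $\tilde f(x)(\omega) = f([x,\omega[{})$. Since $\mathfrak X$ is discrete, continuity of $x \mapsto \tilde f(x)$ is automatic and carries no content, so what really needs to be shown is that $\tilde f(x) \in \mathcal F_{x,\vartheta}(\Omega)$ for each $x$ with uniformly controlled norms, and conversely, that such a family $\tilde f$ assembles into an element of $\mathcal F_\vartheta(S^+\mathfrak X)$.

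The key step is a quantitative comparison of the two metrics. Given chains $x = [x_0,\omega[{}$ and $y = [y_0,\omega'[{}$, two regimes arise. First, if $x_0 = y_0$ and the chains first diverge at index $k\ge 1$, then because $\mathfrak T$ is a tree, they can never re-meet, so $\{i \mid x_i \ne y_i\} = \{k,k+1,\ldots\}$ and
$$d_\vartheta(x,y) \;=\; \sum_{i\ge k}\vartheta^i \;=\; \frac{\vartheta^k}{1-\vartheta}, \qquad d_{x_0,\vartheta}(\omega,\omega') \;=\; \vartheta^{k-1},$$
yielding the clean proportionality $d_\vartheta(x,y) = \frac{\vartheta}{1-\vartheta}\, d_{x_0,\vartheta}(\omega,\omega')$, uniformly in $x_0$. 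Second, if $x_0 \ne y_0$, then the index $i=0$ already contributes, giving the unconditional lower bound $d_\vartheta(x,y)\ge 1$.

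With this in hand, the forward direction is immediate: for $f \in \mathcal F_\vartheta(S^+\mathfrak X)$ and any $x \in \mathfrak X$, specialising the same-basepoint estimate gives $|\tilde f(x)|_{x,\vartheta} \le \frac{\vartheta}{1-\vartheta}|f|_\vartheta$, and trivially $\|\tilde f(x)\|_\infty \le \|f\|_\infty$, so $\|\tilde f\|_{C(\mathfrak X,\mathcal F_\vartheta)} \le \max\bigl(1,\frac{\vartheta}{1-\vartheta}\bigr)\,\|f\|_\vartheta$. For the reverse direction, given $\tilde f$, we define $f(x) := \tilde f(x_0)(\omega)$ for $x = [x_0,\omega[{}$ and split into the same two cases: same-basepoint pairs inherit the bound $|f(x)-f(y)| \le \frac{1-\vartheta}{\vartheta}\,\|\tilde f\|_{C(\mathfrak X,\mathcal F_\vartheta)}\,d_\vartheta(x,y)$ from the proportionality above, while different-basepoint pairs are handled by $|f(x)-f(y)| \le 2\|f\|_\infty \le 2\|\tilde f\|_{C(\mathfrak X,\mathcal F_\vartheta)}\,d_\vartheta(x,y)$, exploiting $d_\vartheta(x,y)\ge 1$. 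Combining yields the converse equivalence.

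The only point that might look worrying at first is the different-basepoint case, since there is no natural comparison between a boundary Hölder norm at $x_0$ and one at $y_0\ne x_0$. This is precisely what is saved by the unit lower bound on $d_\vartheta$: the supremum norm absorbs the mismatch for free, at the cost of a harmless factor of $2$. Once this observation is made, the entire argument reduces to bookkeeping with the proportionality constant $\vartheta/(1-\vartheta)$, which is finite for every $\vartheta \in {]0,1[}$, so the norms are equivalent with $\vartheta$-dependent but basepoint-independent constants.
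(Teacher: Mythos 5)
Your proof is correct and takes essentially the same route as the paper's: identify $S^+\mathfrak X$ with $\mathfrak X\times\Omega$, estimate the Lipschitz seminorm by splitting into same-basepoint pairs (compared fibrewise against $|\tilde f(x)|_{x,\vartheta}$) and different-basepoint pairs (absorbed by $2\|f\|_\infty$ using the lower bound $d_\vartheta(x,y)\ge 1$). Your explicit computation of the proportionality $d_\vartheta(x,y)=\frac{\vartheta}{1-\vartheta}\,d_{x_0,\vartheta}(\omega,\omega')$ on each fibre is in fact a refinement: the paper silently passes between the two fibrewise Lipschitz seminorms as if they were equal, which is only true up to this $\vartheta$-dependent factor, so your constants are the more accurate ones while the conclusion (equivalence of norms) is unaffected.
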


\begin{proof} Recall the identification $S^+\mathfrak X\equiv \mathfrak X\times \Omega$ via $[x,\omega[{}\mapsto (x,\omega)$.  For $f: S^+\mathfrak X\equiv \mathfrak X\times \Omega\to \C$ we define $\tilde f:\mathfrak X\to \mathcal F(\Omega)$ via $\tilde f(x):=f(x,\cdot)$, where $\mathcal F(\Omega)$ is the space of $\C$-valued functions on $\Omega$.\\

\noindent
{\bf Claim:} $\|f\|_\vartheta\le 3\|\tilde f\|_{C(\mathfrak X, \mathcal F_\vartheta)}$.\\

\noindent
It is clear that $\|f\|_\infty=\sup_{x\in\mathfrak X} \|\tilde f(x)\|_\infty\le \|\tilde f\|_{C(\mathfrak X, \mathcal F_\vartheta)}$. Moreover,
\begin{align*}
|f|_\vartheta
  &\leq \max\left\{
    \begin{array}{ll}
      \displaystyle\sup_{x\in\mathfrak X}
      \inf\bigg\{C_f\,\,\mid\,\,
      \begin{array}{ll}
        \forall (x,\gamma),(x,\gamma')\in S^+_x\mathfrak X:\\[2mm]
        |f(x,\gamma)-f(x,\gamma')|\le C_f
        d_\vartheta\big((x,\gamma),(x,\gamma')\big)
      \end{array}
      \bigg\},
      \\[7mm]
      \displaystyle\sup_{(x,\gamma),(x',\gamma')\in S^+\mathfrak X}|f(x,\gamma)-f(x',\gamma')|
    \end{array}
  \right\}\\
&\le \max\{\sup_{x\in\mathfrak X}|\tilde f(x)|_{x,\vartheta},
2\|f\|_\infty\}\\
&\le \max\{\|\tilde f\|_{C(\mathfrak X, \mathcal F_\vartheta)}, 2\|\tilde f\|_{C(\mathfrak X, \mathcal F_\vartheta)}\}\\
&=2\|\tilde f\|_{C(\mathfrak X, \mathcal F_\vartheta)}.
\end{align*}
This proves the claim. To conclude the proof it suffices to observe that $\|\tilde f(x)\|_\infty\le \|f\|_\infty$ and $|\tilde f(x)|_{x,\vartheta}\le |f|_\vartheta$, since this implies $\|\tilde f\|_{C(\mathfrak X, \mathcal F_\vartheta)}\le \|f\|_\vartheta$.
\end{proof}

\begin{lemma}\label{lemmma:HoelderOmegadual}
Suppose that $\mu\in\mathcal F_{o,\vartheta}'(\Omega)$. Then for each $K>\frac{1}{\vartheta}$ there exists $C>0$ such that
\begin{equation}\label{cond}
\forall v\in\mathfrak X:\quad |\mu\big(\Omega_o(v)\big)|\le C K^{d(o,v)}.
\end{equation}
  
Conversely assume that  $\mu\in \mathcal M_\mathrm{fa}(\Omega)$ and $K, C>0$ satisfy Condition~(\ref{cond}). Then $\mu$ extends to a continuous linear functional on $\mathcal F_{o,\vartheta}(\Omega)$ for every $\vartheta$ satisfying $0<\vartheta<\frac{1}{Kq_\mathrm{max}}$.
\end{lemma}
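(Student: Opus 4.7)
The plan is to prove the two implications separately. For the forward direction, I would test the functional $\mu$ against the indicator functions $\mathbf{1}_{\Omega_o(v)}$, which are themselves elements of $\mathcal F_{o,\vartheta}(\Omega)$ whose norms we can compute explicitly. For the converse, I would extend $\mu$ from locally constant functions to all of $\mathcal F_{o,\vartheta}(\Omega)$ by a telescoping sum indexed by the distance to the base point, using the hypothesis to control the successive differences.

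For the forward implication, note that $\|\mathbf{1}_{\Omega_o(v)}\|_\infty\le 1$ and that only pairs $\omega_1,\omega_2\in\Omega$ with exactly one of them in $\Omega_o(v)$ contribute a nonzero difference. For such a pair, the rays $[o,\omega_1[{}$ and $[o,\omega_2[{}$ must diverge strictly before reaching $v$, so their common initial segment has length at most $d(o,v)-1$; hence $d_{o,\vartheta}(\omega_1,\omega_2)\ge\vartheta^{d(o,v)-1}$, which yields $|\mathbf{1}_{\Omega_o(v)}|_{o,\vartheta}\le\vartheta^{1-d(o,v)}$. Applying the functional,
\[
|\mu(\Omega_o(v))| = |\langle\mu,\mathbf{1}_{\Omega_o(v)}\rangle|\le \|\mu\|_{\mathcal F_{o,\vartheta}'}\bigl(1+\vartheta^{1-d(o,v)}\bigr),
\]
which is dominated by $C K^{d(o,v)}$ for any $K>1/\vartheta$ and an appropriate constant $C$ (note $K>1/\vartheta>1$, so $\vartheta^{-d(o,v)}\le K^{d(o,v)}$).

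For the converse, assume the growth condition and $\vartheta Kq_\mathrm{max}<1$. For each vertex $v$ with $\Omega_o(v)\ne\emptyset$ pick a representative $\omega_v\in\Omega_o(v)$, and since $\Omega$ decomposes as the disjoint union of the $\Omega_o(v)$ with $d(o,v)=n$, define the locally constant step function
\[
f_n:=\sum_{d(o,v)=n}f(\omega_v)\,\mathbf{1}_{\Omega_o(v)},
\]
so that $\mu(f_n)$ is defined by Proposition~\ref{prop-Dual1}. The central estimate concerns the differences: for $v$ at distance $n$ with parent $u$, both $\omega_v$ and $\omega_u$ lie in $\Omega_o(u)$, giving $d_{o,\vartheta}(\omega_v,\omega_u)\le\vartheta^{n-1}$ and hence $|f(\omega_v)-f(\omega_u)|\le|f|_{o,\vartheta}\vartheta^{n-1}$. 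Combined with the counting bound $\#\{v:d(o,v)=n\}\le (q_\mathrm{max}+1)q_\mathrm{max}^{n-1}$ this yields
\[
|\mu(f_n-f_{n-1})|\le |f|_{o,\vartheta}\,\vartheta^{n-1}(q_\mathrm{max}+1)q_\mathrm{max}^{n-1}CK^n\le C'|f|_{o,\vartheta}(\vartheta K q_\mathrm{max})^n.
\]

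Because $\vartheta Kq_\mathrm{max}<1$ the telescope $\mu(f):=\mu(f_0)+\sum_{n\ge 1}\mu(f_n-f_{n-1})$ converges absolutely and is bounded by $C\|f\|_\infty+C''|f|_{o,\vartheta}\le C'''\|f\|_{o,\vartheta}$, establishing continuity. What remains are two routine verifications: the limit is independent of the choices of representatives $\omega_v$ (replacing $\omega_v$ by another point of $\Omega_o(v)$ changes $f_n$ pointwise by at most $|f|_{o,\vartheta}\vartheta^n$, and an estimate of the same type shows the induced discrepancy in $\mu(f_n)$ is $O((\vartheta Kq_\mathrm{max})^n)$), and the extension agrees with $\mu$ on locally constant functions (for which $f_n=f$ eventually). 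The main obstacle is the balancing act in the key estimate: the H\"older decay $\vartheta^n$ must dominate the joint exponential growth $(Kq_\mathrm{max})^n$ arising from the measure bound and the counting of vertices, which is precisely what forces the hypothesis $\vartheta<1/(Kq_\mathrm{max})$.
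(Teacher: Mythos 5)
Your proof is correct and follows essentially the same route as the paper: the forward direction tests $\mu$ against the indicators $\mathbf{1}_{\Omega_o(v)}$ with the same norm computation, and your telescoping sum over the step functions $f_n$ is exactly the paper's approximation $\mu_{W,n}(f)=\sum_{d(o,v)=n}\mu(\Omega_o(v))f(W(v))$, with the same parent--child H\"older estimate, vertex count, independence-of-representatives check, and verification on locally constant functions.
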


\begin{proof}
  For $v \neq o$, one finds:
  \[
    | \mathbf{1}_{\Omega_o(v)} |_{\vartheta} = \vartheta^{1-d(o,v)}
    \qquad\text{and}\qquad
    \| \mathbf{1}_{\Omega_o(v)} \|_{0,\vartheta} = 1 + \vartheta^{1-d(o,v)}
  \]
  Assuming that $\mu : \mathcal F_{o,\vartheta}(\Omega) \rightarrow \C$ is
  a bounded linear functional, there is a constant $c > 0$ such that
  \[
    | \mu( \mathbf{1}_{\Omega_o(v)}) |
    \leq c \| \mathbf{1}_{\Omega_o(v)} \|_{0,\vartheta}
    = c ( 1 + \vartheta^{1-d(o,v)} )
  \]
  However, for any $K>\vartheta^{-1}$, there is $C > 0$ with
  \[
    c ( 1 + \vartheta^{1-n} )
    \leq C K^{n}
    \qquad\text{for all\ }n
  \]
  whence~(\ref{cond}) holds.

  Now, we assume that $\mu\in \mathcal M_\mathrm{fa}(\Omega)$ and $K, C>0$
  satisfy the Condition~(\ref{cond}). We explicitly construct a continuous
  extension of $\mu$ to $\mathcal F_{o,\vartheta}(\Omega)$. Note that that the
  locally constant functions are not dense in $\mathcal F_{o,\vartheta}(\Omega)$, whence the extension might not be unique. We base our construction on a pre-chosen
  way to push vertices away from $o$ to infinity, i.e., a map $W:\mathfrak X\to\Omega$ such that
$$\forall v\in\mathfrak X: W(v)\in \Omega_o(v)$$
Given $f\in \mathcal F_{o,\vartheta}(\Omega)$, we define
$$\mu_{W,n}(f):= \sum_{d(o,v)=n} \mu\big(\Omega_o(v)\big)f\big(W(v)\big)$$
for $n\in\N$. Then we find
\begin{align*}
|\mu_{W,n}(f)- \mu_{W,n+1}(f)|
&\le \Big|\sum_{d(o,v)=n}\sum_{d(v',v)=1} \mu(\Omega_o(v'))|f\big(W(v)\big)-f\big(W(v')\big)|\Big|\\
&\le (q_\mathrm{max}+1)q_\mathrm{max}^nCK^{n+1}\|f\|_{o,\vartheta}\vartheta^n \longrightarrow 0
\end{align*}  
if $\vartheta< \frac{1}{Kq_\mathrm{max}}$. Here the summation over $v'$ extends over all neighbors of $v$ which are not in $[o,v]$. Thus, for  $\vartheta< \frac{1}{Kq_\mathrm{max}}$ the limit $\mu_W(f):=\lim_{n\to \infty} \mu_{W,n}(f)$ exists and satisfies (use geometric series)
$$|\mu_W(f)|\le (q+1)CK\|f\|_{o,\vartheta}\frac{1}{1-K\vartheta q_\mathrm{max}}.$$
Next we observe that $\mu_W(f)$ is actually independent of the choice of $W$. In fact, let $W'$ be another such function. Then 
\begin{align*}
|\mu_{W,n}(f)-\mu_{W',n}(f)|
&\le \Big|\sum_ {d(o,v)=n} \mu(\Omega_o(v))|f(W(v))-f(W'(v))|\Big|\\
&\le (q_\mathrm{max}+1)q_\mathrm{max}^{n-1} CK^n\|f\|_{o,\vartheta} \vartheta^n\underset{n\to\infty}{\longrightarrow} 0.
\end{align*}  
In order to conclude the proof we have to show that $\mu_W\in\mathcal K'(\Omega)$ agrees with $\mu$ when viewed as a finitely additive measure. It suffices to show that for any  $v_0\in \mathfrak X$ we have $\mu_W(\Omega_o(v_0))=\mu(\Omega_o(v_0))$. We have
\begin{eqnarray*}
\mu_W(\Omega_o(v_0))
&=&\mu_W(\mathbf 1_{\Omega_o(v_0)})
\ =\ \lim_{n\to\infty}\mu_{W,n}(\mathbf 1_{\Omega_o(v_0)})\\
&=&\lim_{n\to\infty}\sum_{d(o,v)=n}\mu(\Omega_o(v))\mathbf 1_{\Omega_o(v_0)}(W(v)).
\end{eqnarray*}
Note that for $n\ge d(o,v_0)$ precisely the $v$ with $v_0\in[o,v]$ contribute to the sum, which is then equal to 
$$\sum_{d(o,v)=n, v_0\in[o,v]}\mu(\Omega_o(v))= \mu(\Omega_o(v_0)).$$
Thus the sequence is stationary beyond $d(o,v_0)$ with limit $\mu(\Omega_o(v_0))$. 
\end{proof}

\begin{definition} 
{\rm
We say that $g\in C(\mathfrak X)$ is  of \emph{moderate growth} if there exists $B,G>0$ such that $|g(x)|\le BG^{d(o,x)}$ for all $x\in \mathfrak X$.
}
\end{definition}

Our final regularity theorem is now basically a corollary to  Theorem~\ref{thm:Poisson}.

\begin{theorem}\label{thm:mod-growth}
Let $z^2\not\in\{0,1\}$. Then a function $f\in \mathcal E_{\chi(z)}(\mathfrak X)$ is of moderate growth if and only if the boundary value $\mu=\vec\beta_z(f)\in\mathcal M_\mathrm{fa}(\Omega)=\mathcal K'(\Omega)$ is contained in $\mathcal F_{o,\vartheta}'(\Omega)$ for some $\vartheta>0$.
\end{theorem}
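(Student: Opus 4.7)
The strategy is to show that each side of the equivalence is, in turn, equivalent to the existence of constants $C,K>0$ with
$$|\mu(\Omega_o(v))|\le CK^{d(o,v)}\quad\text{for all } v\in\mathfrak X, \qquad(\star)$$
and to exploit the fact that $\vec\mu(\vec e)=\mu(\Omega_o(\tau(\vec e)))$ for $\vec e\in\mathfrak E_o$, so that $(\star)$ is simply an exponential bound on $\vec\mu$. The equivalence of $(\star)$ with ``$\mu\in\mathcal F_{o,\vartheta}'(\Omega)$ for some $\vartheta>0$'' is exactly Lemma~\ref{lemmma:HoelderOmegadual}: membership in $\mathcal F_{o,\vartheta}'(\Omega)$ yields $(\star)$ for any $K>1/\vartheta$, and conversely $(\star)$ yields membership in $\mathcal F_{o,\vartheta}'(\Omega)$ whenever $\vartheta\in(0,1/(Kq_\mathrm{max}))$. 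The bounded-degree hypothesis enters precisely at this last step.

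For the direction moderate growth $\Rightarrow$ H\"older dual, assume $|f(x)|\le BG^{d(o,x)}$ and solve Equation~(\ref{beta}) for $\vec\mu(\vec e)$: for every $\vec e\in\mathfrak E_o$ with $\iota(\vec e)=u$ at distance $n$ from $o$,
$$|\vec\mu(\vec e)|=\frac{|zf(\tau(\vec e))-f(u)|}{|z^2-1|\,|z|^n}\le \frac{B(|z|G+1)}{|z^2-1|}\left(\frac{G}{|z|}\right)^n.$$
Setting $K:=\max(G/|z|,1)$ and adjusting the constant $C$ delivers $(\star)$, and then Lemma~\ref{lemmma:HoelderOmegadual} supplies some $\vartheta\in(0,1/(Kq_\mathrm{max}))$ with $\mu\in\mathcal F_{o,\vartheta}'(\Omega)$.

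For the reverse direction, let $\mu\in\mathcal F_{o,\vartheta}'(\Omega)$ for some $\vartheta>0$ and use Lemma~\ref{lemmma:HoelderOmegadual} to pick $K>1/\vartheta$ and $C>0$ satisfying $(\star)$. Given a vertex $x$ with $n:=d(o,x)\ge 1$, apply Lemma~\ref{subtrees} to the unique chain $o=x_0,x_1,\dots,x_n=x$ (with $m=0$, $k=n$) and multiply through by $z^n$ to obtain
$$f(x)=\frac{f(o)}{z^n}+\frac{z^2-1}{z^2}\sum_{j=1}^n z^{2j-n}\mu(x_j).$$
Inserting $|\mu(x_j)|\le CK^j$ and estimating the resulting sum by a routine geometric-series argument (distinguishing $|z|^2K\le 1$ from $|z|^2K>1$) produces a bound $|f(x)|\le B'G^n$ for suitable $B'>0$ and $G>0$ depending only on $z$, $K$, $C$ and $f(o)$, which is moderate growth.

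No substantive obstacle is anticipated: once the explicit inversion formulas of Theorem~\ref{thm:Poisson} and Lemma~\ref{subtrees} and the H\"older duality of Lemma~\ref{lemmma:HoelderOmegadual} are in hand, the proof reduces to elementary bookkeeping with geometric series. The only delicate point is that the chain of constants $G\leadsto K\leadsto\vartheta$ in the forward direction closes up precisely because bounded degree provides $q_\mathrm{max}<\infty$, so $\vartheta$ can be chosen small enough to meet the constraint $\vartheta<1/(Kq_\mathrm{max})$.
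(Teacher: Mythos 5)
Your proposal follows the paper's proof exactly: both reduce the statement via Lemma~\ref{lemmma:HoelderOmegadual} to the equivalence of the exponential bounds $|f(x)|\le BG^{d(o,x)}$ and $|\mu(\Omega_o(x))|\le CK^{d(o,x)}$, which the paper dismisses as ``a straightforward calculation'' from Condition~(\ref{beta}) and which you carry out correctly (solving~(\ref{beta}) directly for one implication and iterating it along chains, i.e.\ invoking Lemma~\ref{subtrees}, for the other). No issues; your write-up simply supplies the details the paper omits.
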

\begin{proof}
  By Lemma~\ref{lemmma:HoelderOmegadual} it suffices to show that the following are
  equivalent:
  \begin{enumerate}
    \item
      There exist
      $B,G>0$ with $|f(x)|\le BG^{d(o,x)}$ for all $x\in \mathfrak X$.
    \item
      There exist
      $C,K>0$ with $|\mu\big(\Omega_o(x)\big)|\le C K^{d(o,x)}$ for all $x\in \mathfrak X$.
  \end{enumerate}
  As $f$ and $\mu$ satisfy Condition~(\ref{beta}) and $\vec\mu(\vec e)=\mu\left(\Omega_o(\tau(\vec e))\right)$ for any $\vec e\in {\mathfrak E}_o$, this equivalence follows by a straightforward calculation.
\end{proof}

\newcommand{\etalchar}[1]{$^{#1}$}
\providecommand{\bysame}{\leavevmode\hbox to3em{\hrulefill}\thinspace}
\providecommand{\MR}{\relax\ifhmode\unskip\space\fi MR }
\providecommand{\MRhref}[2]{%
  \href{http://www.ams.org/mathscinet-getitem?mr=#1}{#2}
}
\providecommand{\href}[2]{#2}

\end{document}